\def\beq{\begin{equation}}
\def\eeq{\end{equation}}
\def\ba{\begin{array}}
\def\ea{\end{array}}
\numberwithin{equation}{section}
\newenvironment{abs}{\textbf{Abstract}\mbox{  }}{ }
\newenvironment{key words}{\textbf{Keywords}\mbox{  }}{ }
\newtheorem{theorem}{Theorem}[section]
\newtheorem{definition}[theorem]{\textbf{Definition}}
\newtheorem{proposition}[theorem]{\textbf{Proposition}}
\newtheorem{lemma}[theorem]{Lemma}
\renewenvironment{proof}{\noindent{\textbf{Proof.}}}{\hfill$\Box$}
\theoremstyle{remark}
\newtheorem{remark}[theorem]{\textbf{Remark}}
\theoremstyle{plain}
\begin{document}
\title[An integral type  Brezis-Nirenberg problem]{\textbf{An integral type  Brezis-Nirenberg problem on the Heisenberg group }}
\author  {Yazhou Han}

\address{Yazhou Han, Department of Mathematics, College of Science, China Jiliang University, Hangzhou, 310018, China}
\email{yazhou.han@gmail.com}


\date{}
\maketitle

\noindent
\begin{abs}
This paper is devoted to study a class of integral type Brezis-Nirenbreg problem on the Heisenberg group. It is a class of new nonlinear integral equations on the bounded domains of Heisenberg group and related to the CR Yamabe problems on the CR manifold. Based on the sharp Hardy-Littlewood-Sobolev inequalities, the nonexistence and existence results are obtained by Pohozaev type identity, variational method and blow-up analysis, respectively.
%
\end{abs}

\smallskip
\noindent\begin{key words}
Heisenberg group, Brezis-Nirenberg problem, Integral equations, Existence
\end{key words}

\smallskip
\noindent\textbf{Mathematics Subject Classification(2010).}
45G05, 35A01, 35B44 \indent
\section{Introduction\label{Section 1}}

CR manifold is a class of noncommutative geometry and arises from the study of the real hypersurface of complex manifold (see \cite{Dragomir-Tomassini2006, Folland-Stein1974} and the references therein). The complex structure of the real hypersurface, induced from the complex manifold, inspire many interesting geometric property and bring some new difficulties. Particularly, in the study of CR manifold, Heisenberg group $\mathbb{H}^n$ plays a similar role as $\mathbb{R}^n$ to Riemannian manifold. So, this paper is devoted to study the integral type Brezis-Nirenberg problem on the Heisenberg group.

Let us recall  the Sobolev inequality, Hardy-Littlewood-Sobolev (HLS) inequality and their corresponding equations on the Heisenberg group. Then, we will give our integral equations and our results. Involved notations can be seen in the Section \ref{Sec Preliminary}.

\subsection{Sobolev inequality on the Heisenberg group}
In 1980s, Jerison and Lee studied the CR Yamabe problem on CR manifolds in their series papers \cite{JL1983, JL1987, JL1988, JL1989}. As the idea of Yamabe, Trudinger and Aubin (see \cite{Y1960, T1968, A1976, LP1987}), 
the study of CR Yamabe problem is closely related to the sharp Sobolev inequality on the Heisenberg group, which can stated as:
\begin{equation}\label{Sobolev-Hn}
    S_{n,2}\left(\int_{\mathbb{H}^n} |f|^{2^*}d\xi \right)^\frac{2}{2^*} \le \int_{\mathbb{H}^n} |\nabla_H f|^2 d\xi,
\end{equation}
where $2^*=\frac{2Q}{Q-2}$ is Sobloev critical exponent and $S_{n,2}=\frac{4 n^2\pi}{{(2^{2n}n!)}^{1/(n+1)}}$ is the best constant. 
In \cite{JL1988}, Jerison and Lee used the Obata's idea and classified all extremal function, up to group translations, dilations and multiplication by a constant, as
\begin{equation}\label{Sobolev-ex1}
    U(\xi)=U(z,t)=((1+|z|^2)^2+t^2)^{-(Q-2)/4}.
\end{equation}
Recently, Frank and Lieb \cite{Frank-Lieb2012} gave a new proof to the extremal function by a rearrangement-free method.

Obviously, Sobolev inequality holds on any subset $\Omega\subset\mathbb{H}^n$. But, because of the classification of extremal functions on the above, we know that the best constant can not be attained if $\Omega\neq\mathbb{H}^n$. Namely, if $\Omega\neq\mathbb{H}^n$, there is not an energy minimizing solution to the Euler-Lagrange equation of \eqref{Sobolev-Hn}
\begin{equation}\label{Yamabe equ Hn}
    \begin{cases}
    -\Delta_H f=f^{\frac{Q+2}{Q-2}}\quad &\text{in}\quad \Omega,\\
   ~~~~~~~~~~~~ u=0\quad&\text{on}\quad \partial\Omega.
    \end{cases}
\end{equation}
Inspired by the above fact, on bounded domain $\Omega\subset\mathbb{H}^n$, the Brezis-Nireberg problems
\begin{equation}\label{Sob-critical}
  \begin{cases}
  -\Delta_H f=f^{2^*-1}+g(\xi,f), &\quad u>0, \quad \text{in}\quad \Omega,\\
  u=0&\quad\text{on}\quad \partial\Omega.
  \end{cases}
\end{equation}
were studied extensively, such as the results of \cite{ BR2017, Garofalo-Lanconelli1992, HCR2015, Loiudice2007, N1999, Wang2001}, etc.

%

\subsection{Hardy-Littlewood-Sobolev (HLS) inequlity on the Heisenberg group}
In \cite{Folland-Stein1974}, Folland and Stein studied the sigular integral operator and obtain the following HLS inequalities
\begin{equation}\label{class-HLS-1}
    \left|\int_{\mathbb{H}^n}\int_{\mathbb{H}^n} \overline{f(\xi)}g(\eta)|\eta^{-1}\xi|^{\alpha-Q}d\eta d\xi\right| \leq D(n,\alpha,p)\|f\|_{L^{q}(\mathbb{H}^n)}\|g\|_{L^p(\mathbb{H}^n)}
\end{equation}
where $f\in L^{q},\ g\in L^p$, $0<\alpha<Q$ and $\frac 1{q}+\frac 1p+\frac{Q-\alpha}Q=2$. In fact, the result is followed from the Proposition 8.7 of \cite{Folland-Stein1974} (see the Proposition \ref{pro 8.7 of FS} and its Remark in Section \ref{Sec Preliminary}).

Recently, for the diagonal case $p=q=\frac{2Q}{Q+\alpha}$, Frank and Lieb \cite{Frank-Lieb2012} identified the sharp constant $D(n,\alpha,p)$ and classified all extremal functions. We can summarize their results as
\begin{theorem}[Sharp HLS inequality on $\mathbb{H}^n$]\label{HLS Hn}
For $0<\alpha<Q$ and $p=\frac{2Q}{Q+\alpha}$. Then for any $f,g\in L^p(\mathbb{H}^n)$,
\begin{equation}\label{class-HLS}
    \left|\int_{\mathbb{H}^n} \int_{\mathbb{H}^n} \overline{f(\xi)}|\eta^{-1}\xi|^{-(Q-\alpha)} g(\eta) d\eta d\xi\right|\le D_{n,\alpha} ||f||_{L^{p}(\mathbb{H}^n)}||g||_{L^{p}(\mathbb{H}^n)}
\end{equation}
where
\begin{equation}\label{extreCon}
D_{n,\alpha}:=\left(\frac{\pi^{n+1}}{2^{n-1}n!}\right)^{(Q-\alpha)/Q} \frac{n!\Gamma(\alpha/2)}{\Gamma^2((Q+\alpha)/4)}.
\end{equation}
And equality holds if and only if
\begin{equation}\label{HLS-ex}
f(\xi)=c_1g(\xi)=c_2H(\delta_r(\zeta^{-1}\xi),
\end{equation}
for some $c_1, \ c_2\in\mathbb{C}$,\ $r>0$ and $\zeta\in\mathbb{H}^n$ (unless $f\equiv 0$ or $g\equiv 0$). Here $H$ is defined as
\begin{equation}\label{HLS-ex1}
    H(\xi)=H(z,t)=((1+|z|^2)^2+t^2)^{-(Q+\alpha)/4}.
\end{equation}
\end{theorem}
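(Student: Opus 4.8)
The plan is to establish the sharp Hardy-Littlewood-Sobolev inequality on $\mathbb{H}^n$ by reducing it to a problem on the sphere $\mathbb{S}^{2n+1}$ via the Cayley transform, where the conformal group acts transitively and the rearrangement-free competition argument of Frank and Lieb applies. First I would recall that the HLS kernel $|\eta^{-1}\xi|^{-(Q-\alpha)}$ on the Heisenberg group is conformally covariant under the $CR$ automorphisms generated by translations, dilations $\delta_r$, and the inversion; the Cayley transform $\mathcal{C}:\mathbb{H}^n\to\mathbb{S}^{2n+1}\setminus\{\text{pt}\}$ then intertwines this kernel with a multiple of the chordal-distance kernel $|1-\zeta\cdot\bar\omega|^{-(Q-\alpha)/2}$ on the sphere, absorbing the Jacobian factors into conformal weights. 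The existence of \eqref{class-HLS} with a \emph{finite} constant is already guaranteed by \eqref{class-HLS-1} from Folland–Stein (Proposition 8.7), so the entire content is the identification of $D_{n,\alpha}$ and the classification of extremizers.

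Next I would set up the functional $\mathcal{J}(f,g)=\left|\int\!\!\int \overline{f}\,|\eta^{-1}\xi|^{-(Q-\alpha)}g\right|$ normalized by $\|f\|_p\|g\|_p$ and, after transporting to the sphere, work on $\mathbb{S}^{2n+1}$ with the induced kernel. The core of the argument is the rearrangement-free method: rather than symmetric decreasing rearrangement (which is delicate on $\mathbb{H}^n$ because balls in the Korányi gauge are not geodesic balls), one exploits the conformal invariance to reduce any would-be extremizing sequence to one that is stable under the inversion fixing two antipodal points. Concretely, I would decompose the functional using the stereographic projection and apply the competition inequality that plays $f$ against its "conformal reflection," showing via a strict convexity or a spectral argument on $\mathbb{S}^{2n+1}$ that equality forces $f$ to be invariant under the relevant one-parameter subgroup. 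Iterating over enough subgroups pins down $f$ up to the conformal group, which on $\mathbb{H}^n$ translates back into the three-parameter family of translations $\zeta$, dilations $r$, and constant multiples in \eqref{HLS-ex}, with the profile $H$ in \eqref{HLS-ex1} arising as the unique positive solution invariant under the stabilizer of a point.

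The value $D_{n,\alpha}$ is then computed by evaluating the functional on the explicit extremizer $H$: substituting \eqref{HLS-ex1} reduces the double integral to a one-dimensional Beta/Gamma integral over the Heisenberg gauge, and collecting the volume of $\mathbb{S}^{2n+1}$ together with the Jacobian constants from the Cayley transform yields the closed form \eqref{extreCon} involving $\Gamma(\alpha/2)$ and $\Gamma^2((Q+\alpha)/4)$. For the classification, once the extremizer is known to be a conformal image of $H$, uniqueness follows from the fact that the Euler–Lagrange system $f = c\,(|\cdot|^{-(Q-\alpha)}\ast g^{p-1})$, $g=c\,(|\cdot|^{-(Q-\alpha)}\ast f^{p-1})$ forces $f\equiv g$ up to a positive constant (by a standard argument testing the two equations against each other), so the problem collapses to classifying positive solutions of a single conformally invariant integral equation.

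The hard part will be the rearrangement-free step: on $\mathbb{H}^n$ one cannot simply invoke Riesz rearrangement, so the whole weight of the proof rests on engineering the conformal competition inequality correctly and verifying that the equality case is \emph{strict} except on the claimed orbit. The Jacobian bookkeeping through the Cayley transform is also error-prone, since one must track the conformal weight $(Q-\alpha)/2$ on both factors simultaneously to confirm that the Heisenberg kernel maps exactly to the spherical chordal kernel with no leftover density. I expect the remaining computations — the Gamma-function integral giving $D_{n,\alpha}$ and the passage between the integral equation and its Euler–Lagrange form — to be routine once the conformal reduction is in place.
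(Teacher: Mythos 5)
First, a point of context: the paper does not actually prove Theorem \ref{HLS Hn} --- it is quoted as a summary of Frank and Lieb \cite{Frank-Lieb2012}, with only the non-sharp inequality \eqref{class-HLS-1} traced back to Folland--Stein. So the only meaningful comparison is with the Frank--Lieb proof itself. Your outline gets the outer architecture of that proof right: the Cayley transform carrying the kernel $|\eta^{-1}\xi|^{-(Q-\alpha)}$ to the chordal kernel on $\mathbb{S}^{2n+1}$ with conformal weights, the reduction of the two-function problem to a single function via the symmetry of the bilinear form, and the evaluation of $D_{n,\alpha}$ on the profile \eqref{HLS-ex1} by a Beta/Gamma computation.

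The gap is in the step you yourself flag as the hard one. The mechanism you propose --- a ``competition inequality that plays $f$ against its conformal reflection,'' iterated over one-parameter subgroups until $f$ is pinned down --- is not the Frank--Lieb argument and is not known to work here: reflection positivity of the kernel, which underlies that kind of two-point comparison for the Euclidean HLS at special exponents, is precisely what is unavailable for $|\eta^{-1}\xi|^{-(Q-\alpha)}$ on $\mathbb{H}^n$ for general $\alpha$, and this is why the theorem resisted proof for so long. What Frank and Lieb actually do, after establishing the (nontrivial) existence of an optimizer by recentering a maximizing sequence with the conformal group, is normalize the optimizer $h$ on $\mathbb{S}^{2n+1}$ to have zero center of mass, $\int_{\mathbb{S}^{2n+1}}\omega\,|h|^{p}\,d\zeta=0$, and then exploit the nonpositivity of the second variation tested against the coordinate functions $\omega_j h$, combined with explicitly computed eigenvalues of the kernel on the $U(n)$-spherical harmonics of bidegree $(j,k)$; a strict eigenvalue inequality forces $h$ to be constant, which transforms back to \eqref{HLS-ex}. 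Your sketch also takes the existence of an extremizer essentially for granted, which modulo the noncompact conformal group is itself a substantive step. So while the proposal is a reasonable road map to the result, its central classification step would have to be replaced wholesale by the center-of-mass/second-variation argument.
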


By a duality argument, based on the fundamental solution of sub-Laplace $-\Delta_H$ (see \cite{Folland1973} or \cite{Folland-Stein1974}), we see that the case $\alpha=2$ of Theorem \ref{HLS Hn} is equivalent to the sharp Sobolev inequality \eqref{Sobolev-Hn}. Hence, it is worth to study the   integral equation related to HLS inequality on the Heisenberg  group.

On the other hand, the integral form curvature problems was introduced and studied by Prof. Zhu in \cite{Zhu2016},  which gives an idea of global analysis to curvature problems. Dou and Zhu \cite{DZ2017} discussed the existence and nonexistence of positive solutions for an integral equation related to HLS inequality on the bounded domain of $\mathbb{R}^n$, and found some new phenomena which is different with partial differential equations.  This also implies that  integral equations have the independent research interests except using as tools for the study of differential equations.
Hence, in this work we will discuss the following  integral equation related to HLS inequality on the Heisenberg group.

For any smooth domain $\Omega \subset \mathbb{H}^n$ (for example, say, the  boundary is $C^2$), we consider
\begin{eqnarray*}
D_{n,\alpha}(\Omega)=\sup_{f\in L^\frac{2Q}{Q+\alpha}(\Omega)\backslash\{0\}} \frac{\int_{\Omega} \int_{\Omega} f(\xi)|\eta^{-1}\xi|^{-(Q-\alpha)} f(\eta) d\xi d\eta}{||f||^2_{L^\frac{2Q}{Q+\alpha}(\Omega)}}.
\end{eqnarray*}
Without loss of generality, we only need to consider non-negative functions.

Similar to CR Yamabe problem, we also can  investigate the fact
\begin{eqnarray}\label{energy}
 D_{n,\alpha} (\Omega)= D_{n,\alpha},
  \end{eqnarray}
  and $D_{n,\alpha} (\Omega) $ is not attained by any functions if $\Omega \ne \mathbb{H}^n$ (see Proposition \ref{prop2-1} below).

  Notice that the corresponding Euler-Lagrange equation for the maximizer (if the supremum is attained) is the following integral equation:
 \begin{equation}\label{HB-1}
f^\frac{Q-\alpha}{Q+\alpha}(\xi)=\int_\Omega \frac{f(\eta)}{|\eta^{-1}\xi|^{n-\alpha}}d\eta,\quad \xi\in \overline \Omega.
\end{equation}
We thus know that there is not an energy maximizing  solution to above integral equation.

Similar to Brezis-Nirenberg problem on $\mathbb{R}^n$ (see \cite{BN1983}), we will study the existence or non-existence of positive solutions to the above integral equation.
%
%
%
To this end, we consider the following general equation:

\begin{equation}\label{HB}
f^{q-1}(\xi)=\int_\Omega \frac{f(\eta)}{|\eta^{-1}\xi|^{Q-\alpha}}d\eta+\lambda \int_\Omega \frac{f(\eta)}{|\eta^{-1}\xi|^{Q-\alpha-1}}d\eta,\quad \xi\in \overline \Omega.
\end{equation}
 For simplicity, we denote $p_\alpha=\frac{2Q}{Q-\alpha},\  q_\alpha=\frac{2Q}{Q+\alpha}$ throughout  this paper.

Our main results is as follows.
\begin{theorem}\label{main}
Assume $\alpha \in (0, Q)$ and $\Omega\subset\mathbb{H}^n$ is a smooth bounded domain.\\
(1) For $\frac{2Q}{Q+\alpha}<q<2$ (subcritical case), there is a positive solution $f\in \Gamma^\alpha(\overline{\Omega})\subset C^{\alpha/2}(\overline\Omega)$ to equation \eqref{HB} for any given $\lambda \in \mathbb{R}$;\\
(2) For $q=\frac{2Q}{Q+\alpha}$ (critical case) and   $\lambda>0$, there is a positive solution $f\in \Gamma^\alpha(\overline{\Omega})$ to equation \eqref{HB};\\
(3) For $1<q\le\frac{2Q}{Q+\alpha}$ (critical and supercritical case) and   $\lambda\le 0$, if  $\Omega$ is a $\delta$-starshaped domain, then  there is only trivial non-negative $C^1$ continuous (up to the boundary) solution to \eqref{HB}.
\end{theorem}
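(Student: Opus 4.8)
The plan is to treat the three regimes through a common variational/identity framework, using throughout the two Riesz-type bilinear forms appearing in \eqref{HB},
\[
B_1(f)=\iint_{\Omega^2}\frac{f(\xi)f(\eta)}{|\eta^{-1}\xi|^{Q-\alpha}}\,d\xi d\eta,\qquad
B_2(f)=\iint_{\Omega^2}\frac{f(\xi)f(\eta)}{|\eta^{-1}\xi|^{Q-\alpha-1}}\,d\xi d\eta,
\]
of homogeneity orders $\alpha$ and $\alpha+1$. For the existence parts (1) and (2) I would study the maximization problem $D^{\lambda,q}(\Omega)=\sup\{B_1(f)+\lambda B_2(f): f\ge 0,\ \|f\|_{L^q(\Omega)}=1\}$. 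By the sharp HLS inequality of Theorem \ref{HLS Hn}, $B_1$ is bounded on $L^{q_\alpha}$, while $B_2$ is a strictly less singular, lower-order form, so $D^{\lambda,q}(\Omega)$ is finite for every $\lambda\in\R$. The Euler--Lagrange equation of a normalized maximizer is exactly \eqref{HB} after absorbing the Lagrange multiplier by the scaling $f\mapsto cf$ (which needs the multiplier positive, hence $D^{\lambda,q}(\Omega)>0$, itself clear since $B_1(f)>0$ for any nontrivial $f\ge 0$); positivity and the regularity $f\in\Gamma^\alpha(\overline\Omega)\subset C^{\alpha/2}(\overline\Omega)$ then follow from positivity of the kernels and a standard bootstrap for the potential operators.

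For part (1) the decisive point is compactness. Since $q>q_\alpha$ is subcritical, the potential operators with kernels $|\eta^{-1}\xi|^{-(Q-\alpha)}$ and $|\eta^{-1}\xi|^{-(Q-\alpha-1)}$ are compact on the bounded set $\Omega$ (truncating the kernel gives a Hilbert--Schmidt operator, and the truncation error tends to zero in operator norm precisely below the critical exponent). Hence $B_1,B_2$ are weakly continuous along $L^q$-bounded sequences, a maximizing sequence converges up to a subsequence to a genuine maximizer, and the supremum is attained for every $\lambda\in\R$.

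For part (2) ($q=q_\alpha$, $\lambda>0$) this compactness is exactly what fails, and this is the heart of the matter. I would run the Brezis--Nirenberg dichotomy: by a concentration-compactness / blow-up analysis a maximizing sequence either converges or concentrates at a point, and in the concentration case the surviving value of $B_1$ is at most the sharp constant $D_{n,\alpha}$ while the lower-order term $\lambda B_2$ disappears, so the only obstruction to compactness carries energy exactly $D_{n,\alpha}$. It therefore suffices to prove the strict inequality $D^{\lambda}(\Omega)>D_{n,\alpha}$. I would establish it by inserting the extremal bubbles $H$ of \eqref{HLS-ex1}, localized in $\Omega$ by a cutoff and rescaled by $\delta_r$, and expanding $B_1$, $B_2$ and $\|\cdot\|_{q_\alpha}$ in powers of $r$: the cutoff costs a controlled error in $B_1$, whereas the positive term $\lambda B_2$ contributes at strictly lower order in $r$ with a favorable sign, so the test ratio exceeds $D_{n,\alpha}$ for small $r$. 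Carrying out these Heisenberg-group integral expansions, and making the blow-up description quantitative enough to rule out multi-bubble splitting, is the main obstacle of the whole theorem.

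Part (3) is a nonexistence statement, which I would prove by a nonlocal Pohozaev identity. First, a nontrivial nonnegative $C^1$ solution of \eqref{HB} is strictly positive on $\overline\Omega$, since its right-hand side is a positive potential. Multiplying \eqref{HB} by $f$ gives the energy identity $A=B_1(f)+\lambda B_2(f)$ with $A=\int_\Omega f^{q}$. For the Pohozaev identity I would multiply \eqref{HB} by $X\cdot\nabla f$, where $X$ generates the Heisenberg dilations $\delta_r$ (so $\operatorname{div}X=Q$), integrate over $\Omega$, and integrate by parts in the resulting double integral using that $|\eta^{-1}\xi|^{-\beta}$ is homogeneous of degree $-\beta$ under the joint dilation. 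The key simplification is that the double boundary term collapses, via the equation itself, to $\int_{\partial\Omega}(X\cdot\nu)f^{q}$; combining with the energy identity yields
\[
\Big(\tfrac1q-1\Big)\int_{\partial\Omega}(X\cdot\nu)f^{q}\,dS
+\Big(\tfrac{Q+\alpha}{2}-\tfrac{Q}{q}\Big)A
+\tfrac{\lambda}{2}B_2(f)=0.
\]
On a $\delta$-starshaped domain $X\cdot\nu\ge 0$ on $\partial\Omega$, and for $1<q\le q_\alpha$, $\lambda\le 0$ all three terms are nonpositive (the first since $q>1$, the second since $\tfrac{Q}{q}\ge\tfrac{Q+\alpha}{2}$, the third since $\lambda\le 0$); their sum vanishing forces each to vanish, which gives $f\equiv 0$ — from the second term when $q<q_\alpha$, and from the first term (using strict starshapedness, together with $f>0$ on $\partial\Omega$) when $q=q_\alpha$. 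The technical hurdle here is the rigorous integration by parts across the singular diagonal $\xi=\eta$, which I would handle by excising an $\varepsilon$-neighborhood of the diagonal and showing its contribution vanishes as $\varepsilon\to 0$, and by fixing the precise meaning of ``$\delta$-starshaped'' so that the needed boundary sign is available.
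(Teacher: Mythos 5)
Your proposal is correct and follows essentially the same strategy as the paper in all three parts: a constrained maximization with compactness of the truncated Riesz potentials for the subcritical case, the strict energy inequality $Q_\lambda(\Omega)>D_{n,\alpha}$ obtained from rescaled extremal bubbles for the critical case, and a dilation-based Pohozaev identity for nonexistence (your final identity, after the substitution $g=f^{q-1}$, $p=q/(q-1)$, is literally the paper's identity \eqref{Pohozaev identity}, and your sign discussion for $1<q\le q_\alpha$, $\lambda\le 0$ matches the paper's conclusion, including the separate treatment of $q=q_\alpha$ via the boundary term). The one implementation difference is in part (2): you propose running concentration-compactness directly on a critical maximizing sequence and flag the exclusion of multi-bubble splitting as the main obstacle, whereas the paper instead approximates by the subcritical problems $Q_{\lambda,q}(\Omega)$, proves a uniform $L^\infty$ bound on the subcritical maximizers by blowing up at their maximum points (Lemma \ref{lem3-3}), and passes to the limit $q\to q_\alpha^+$; this route needs only a single rescaling at the maximum point and the limiting equation's contradiction with $Q_\lambda(\Omega)>D_{n,\alpha}$, so the multi-bubble issue you worry about never arises. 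Either route works, but the paper's subcritical approximation is the cheaper way to make the compactness step rigorous here.
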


\begin{remark}
In \cite{DZ2017}, Dou and Zhu discussed the integral equations \eqref{HB} on the bounded domain $\Omega\subset\mathbb{R}^n$ and proved the results similar to Theorem \ref{main} with a constraint $\alpha>1$. We give a different proof of compactness and regularity, it can extend $0<\alpha<1$.
\end{remark}

We organize the paper as follows: In section \ref{Sec Preliminary}, we introduce some notations and some known facts about Heisenberg group. In Section \ref{Sec subcri-critical non}, based on Frank and Lieb's result, namely Theorem \ref{HLS Hn}, we show the estimate \eqref{energy} and prove that $D_{n,\alpha}(\Omega)$ can not be attained. Then, by establishing a class of Pohozaev identity related to integral equations \eqref{HB}, we can prove the nonexistence result (part (3) of Theorem \ref{main}).  Section \ref{Sec supercritial exist} is devoted to the part (1) of Theorem \ref{main}. This is completed by two steps: existence result in $L^q(\Omega)$ (Lemma \ref{Bound- attained}) and regularity (Lemma \ref{regularity-lemma}). In section \ref{Sec critical exist}, we will give the existence for the critical exponent case (part (2) of Theorem \ref{main}) by the approximation method from subcritical to critical. To complete the proof, we need the uniform bound about the solutions of subcritical equations, which is obtained by the blow-up analysis (see Lemma \ref{lem3-3}).
%
%

\section{Preliminaries of $\mathbb{H}^n$}\label{Sec Preliminary}

In this section, we will state some notations and some known facts about the Heisenberg group $\mathbb{H}^n$. More details can be found in \cite{Folland1973, Folland-Stein1974, Folland1975} and the references therein.

The Heisenberg group $\mathbb{H}^n$ consists of the set
    $$\mathbb{C}^n \times \mathbb{R}=\{(z,t):z=(z_1, \cdots, z_n)\in \mathbb{C}^n,t\in  \mathbb{R}\}$$
with the multiplication law
    $$(z,t)(z',t')=(z+z', t+t'+2Im(z \cdot \overline{z'})),$$
where $z \cdot \overline{z'}=\sum_{j=1}^n z_j \overline{z_j'}.$ As usual, we write $z_j=x_j+\sqrt{-1}y_j$. In the sequel, we always denote the point of $\mathbb{H}^n$ by lowercase Greek characters such as $\xi=(z,t)=(x,y,t)$, $\eta=(w,s)=(u,v,s)$, etc.

The Lie algebra is spanned by
the left invariant vector fields
\begin{displaymath}
T=\frac{\partial}{\partial t}, X_j=\frac\partial{\partial x_j}+2y_j\frac\partial{\partial
t},\,Y_j=\frac\partial{\partial y_j}-2x_j\frac\partial{\partial
t},\,j=1,\cdots,n.
\end{displaymath}
The horizontal gradient and the sub-Laplacian are defined by $\nabla_{H}=(X_1,\cdots,X_n,Y_1,\cdots,Y_n)$ and
\begin{equation*}
    \triangle_H=\sum_{j=1}^n(X_j^2+Y_j^2),
\end{equation*}
respectively.

For any points $\xi=(z,t)$ and $\eta=(w,s)$, the norm function $|\xi|$ is defined as
    $$|\xi|=(|z|^4+t^2)^{\frac 14},$$
and, correspondingly, the distance between $\xi$ and $\eta$ is defined as $|\eta^{-1}\xi|$.
A family of dilations is defined as
    $$\delta_r(z,t)=(rz,r^2t),\quad\forall r>0,$$
and the homogeneous dimension with respect to the dilations is $Q=2n+2$.

Now, we state some basic facts on Heisenberg group as follows.

\begin{proposition}[(8.8) of \cite{Folland-Stein1974}]\label{pro compara norms}
For any $\xi\in\mathbb{H}^n$ with $|\xi|\leq 1$, then
    $$\|\xi\|\leq |\xi|\leq \|\xi\|^{1/2},$$
where $\|\cdot\|$ is the Euclidean norm.
\end{proposition}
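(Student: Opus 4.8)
The plan is to reduce everything to an elementary inequality between two nonnegative real parameters. Writing $\xi=(z,t)$ and setting $a=|z|^2\ge 0$, $b=t^2\ge 0$, the two norms become $|\xi|=(a^2+b)^{1/4}$ and $\|\xi\|=(a+b)^{1/2}$, so that $|\xi|^4=a^2+b$ and $\|\xi\|^2=a+b$. The hypothesis $|\xi|\le 1$ reads $a^2+b\le 1$; since $b\ge 0$ this forces $a^2\le 1$, hence $0\le a\le 1$, and likewise $b\le 1$. These two bounds $a\le 1$ and $b\le 1$ are the only consequences of the hypothesis I expect to use, and after this substitution the statement is purely algebraic, involving no property of $\mathbb{H}^n$ beyond the definitions of the two norms.

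First I would dispatch the right-hand inequality $|\xi|\le\|\xi\|^{1/2}$. Raising to the fourth power, it is equivalent to $|\xi|^4\le\|\xi\|^2$, i.e. $a^2+b\le a+b$, i.e. $a^2\le a$. This is immediate from $0\le a\le 1$, so the right-hand bound follows cleanly and with no room for a constant. Equality here corresponds exactly to $a\in\{0,1\}$, i.e. to $z=0$ or $|z|=1$, which is a useful sanity check against the extremal configurations $(0,t)$ and $(z,0)$.

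The main obstacle is the left-hand inequality $\|\xi\|\le|\xi|$. Squaring twice, it reduces to $(a+b)^2\le a^2+b$, that is $a^2+2ab+b^2\le a^2+b$, equivalently $b(2a+b)\le b$; for $b>0$ this is $2a+b\le 1$. This is precisely the point at which the anisotropic scaling of $\mathbb{H}^n$ (the $t$-direction scaling quadratically under $\delta_r$) enters, and where the crude bounds $a\le 1$, $b\le 1$ no longer suffice: the cross term $2ab$ must be shown not to overwhelm the linear term $b$. I would therefore aim to exploit the sharper inequalities $a\le|\xi|^2$ and $b\le|\xi|^4$, which follow from $a^2\le a^2+b=|\xi|^4$ and $b\le a^2+b=|\xi|^4$, and to combine them to control $2ab+b^2$ against $b$. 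I expect this cross-term estimate to be the crux of the entire statement, and the place where either a careful use of $|\xi|\le 1$ closes the argument or, should the literal bound prove too strong, the correct dimensional constant appears; in the latter case the honest target is $c\,\|\xi\|\le|\xi|\le\|\xi\|^{1/2}$ for an explicit $c\in(0,1]$, established by the very same reduction.
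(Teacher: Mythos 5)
Your reduction to the scalar variables $a=|z|^2$, $b=t^2$ is the right move, and your treatment of the right-hand inequality is complete and correct: $|\xi|^4=a^2+b\le a+b=\|\xi\|^2$ follows from $a\le 1$. Note that the paper itself offers no proof of this proposition --- it is quoted from Folland--Stein with only a citation --- so your argument has to stand on its own.

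It does not yet do so: the left-hand inequality is exactly where you stop, and the dichotomy you leave open resolves \emph{against} the literal statement. As you computed, for $b>0$ the inequality $\|\xi\|\le|\xi|$ is equivalent to $2a+b\le 1$, and this does not follow from $a^2+b\le 1$. Concretely, take $|z|^2=a=4/5$ and $t^2=b=1/10$: then $|\xi|^4=0.74$, so $|\xi|\approx 0.928<1$, while $\|\xi\|=\sqrt{0.9}\approx 0.949>|\xi|$. Hence no refinement of your bounds $a\le|\xi|^2$, $b\le|\xi|^4$ can close the argument; the statement as printed is simply false without a constant. What is true --- and what your own reduction gives in one line --- is the version you flag as the fallback: since $b\le 1$ implies $b^2\le b$, Cauchy--Schwarz gives $\|\xi\|^2=a+b\le\sqrt{2}\,(a^2+b^2)^{1/2}\le\sqrt{2}\,(a^2+b)^{1/2}=\sqrt{2}\,|\xi|^2$, i.e.\ $\|\xi\|\le 2^{1/4}|\xi|$; the sharp constant is $\sqrt{5}/2$, attained at $a=1/2$, $b=3/4$. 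This weaker form $c\,\|\xi\|\le|\xi|\le\|\xi\|^{1/2}$ is all the paper ever uses (in the proof of Proposition \ref{pro 1.15 of Folland} the comparison enters only through an unspecified constant $C$), so your write-up should commit to that version and supply the Cauchy--Schwarz step rather than leaving the crux unresolved.
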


\begin{proposition}[Lemma 8.9 of \cite{Folland-Stein1974}]\label{Pro triangle ineq}
There exists a constant $C\geq 1$ such that, for all $\xi,\eta\in\mathbb{H}^n$,
    $$|\xi+\eta|\leq C(|\xi|+|\eta|),\quad |\xi\eta|\leq C(|\xi|+|\eta|),$$
where $\xi+\eta$ represents the common vector adding.
\end{proposition}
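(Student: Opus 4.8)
The plan is to exploit the interaction between the homogeneous norm and the dilation structure: both desired inequalities are invariant under the simultaneous dilation $(\xi,\eta)\mapsto(\delta_r\xi,\delta_r\eta)$, so it suffices to establish them on the compact ``unit sphere'' $\{|\xi|+|\eta|=1\}$, where continuity and compactness produce the constant $C$ automatically.

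First I would record two homogeneity facts. Since $\delta_r(z,t)=(rz,r^2t)$, a direct computation gives $|\delta_r\xi|=r|\xi|$ for every $r>0$, so the norm is homogeneous of degree one. Moreover $\delta_r$ is a group automorphism: writing out the multiplication law yields $(\delta_r\xi)(\delta_r\eta)=\delta_r(\xi\eta)$, and likewise $\delta_r\xi+\delta_r\eta=\delta_r(\xi+\eta)$ for ordinary vector addition. Consequently both functions $N_1(\xi,\eta)=|\xi\eta|$ and $N_2(\xi,\eta)=|\xi+\eta|$ are homogeneous of degree one, exactly matching the right-hand side $|\xi|+|\eta|$.

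Next I would reduce to a compact set. For $(\xi,\eta)\neq(0,0)$ I put $r=|\xi|+|\eta|>0$ and rescale by $\delta_{1/r}$, so that the rescaled pair lies on $K=\{(\xi',\eta'):|\xi'|+|\eta'|=1\}$. Because $|\xi|^4=|z|^4+t^2$, the sublevel sets of $|\cdot|$ are Euclidean-bounded and closed, so $K$ is compact; since the group product and the norm are continuous, both $N_1$ and $N_2$ attain finite maxima $C_1,C_2$ on $K$. By homogeneity this gives $N_i(\xi,\eta)=r\,N_i(\delta_{1/r}\xi,\delta_{1/r}\eta)\le C_i\,r=C_i(|\xi|+|\eta|)$, and the case $(\xi,\eta)=(0,0)$ is trivial. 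Taking $C=\max\{C_1,C_2,1\}$ proves both inequalities; the normalization $C\ge1$ is consistent, since choosing $\eta=0$ forces $|\xi|\le C|\xi|$.

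The argument is essentially soft and has no genuine obstacle, but the one point demanding care is the verification that $\delta_r$ is an automorphism of $\mathbb{H}^n$ (so that $N_1$ is truly homogeneous): this relies on the cross term $2\,\mathrm{Im}(z\cdot\overline{w})$ scaling like $r^2$, matching the scaling of the $t$-component. An alternative, constant-explicit route would estimate $|\xi\eta|^4=|z+w|^4+(t+s+2\,\mathrm{Im}(z\cdot\overline{w}))^2$ directly via $|z+w|\le|z|+|w|$ and $|\mathrm{Im}(z\cdot\overline{w})|\le|z||w|$, but this is computationally heavier and yields a worse constant, so I would prefer the compactness proof.
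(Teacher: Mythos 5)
Your proof is correct. The paper itself gives no argument for this proposition --- it is quoted verbatim as Lemma 8.9 of Folland--Stein and used as a background fact --- so there is nothing to compare against within the paper; your dilation-invariance plus compactness argument is the standard proof (and is essentially the one in the cited source), with all the key verifications ($|\delta_r\xi|=r|\xi|$, $\delta_r$ an automorphism, compactness of $\{|\xi|+|\eta|=1\}$) carried out correctly.
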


We say that function $f$ is homogeneous of degree $\lambda$ if $f(\delta_r(z,t))=r^\lambda f(z,t)$, and that a distribution $F\in \mathscr{D}'$ is homogeneous of degree $\lambda$ if
    $$F(r^{-Q}g(\delta_{r^{-1}}(z,t)))=r^\lambda F(g).$$

\begin{proposition}[Proposition 8.1 of \cite{Folland-Stein1974}]\label{pro 8.1 of FS}
If $F\in\mathscr{D}'$ is homogeneous of degree $\lambda$, then $X_jF$ and $Y_j F$ are homogeneous of $\lambda-1$ for $1\leq j\leq n$.
\end{proposition}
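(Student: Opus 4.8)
The plan is to prove Proposition \ref{pro 8.1 of FS} directly from the definition of homogeneity for distributions, using the fact that each vector field $X_j$ and $Y_j$ is a differential operator that is itself homogeneous of degree $-1$ with respect to the dilations $\delta_r$. The core observation is that homogeneity of a distribution is defined through its pairing against test functions rescaled by the dilation action, so differentiating commutes with this action in a controlled way. Concretely, I would first record the action of the dilations on the vector fields: writing $\delta_r(z,t)=(rz,r^2t)$ and computing the chain rule, one checks that for any smooth $g$,
\begin{equation*}
X_j\bigl(g\circ\delta_r\bigr)=r\,\bigl((X_j g)\circ\delta_r\bigr),\qquad Y_j\bigl(g\circ\delta_r\bigr)=r\,\bigl((Y_j g)\circ\delta_r\bigr),
\end{equation*}
which encodes that $X_j,Y_j$ are homogeneous of degree $-1$. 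This is the one genuine computation needed, and it follows from the explicit expressions $X_j=\partial_{x_j}+2y_j\partial_t$, $Y_j=\partial_{y_j}-2x_j\partial_t$ together with $\partial_{x_j}(g\circ\delta_r)=r(\partial_{x_j}g)\circ\delta_r$ and $\partial_t(g\circ\delta_r)=r^2(\partial_t g)\circ\delta_r$, the extra factor of $r$ in the $\partial_t$ term being exactly compensated by the factor $r$ (not $r^2$) sitting in front of $y_j$ and $x_j$ under the dilation.

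Next I would set up the distributional computation. By definition, the derivative $X_jF$ acts on a test function $g$ by $(X_jF)(g)=-F(X_j^{t}g)$, where $X_j^{t}$ is the formal transpose of $X_j$; since the vector fields have constant-plus-linear coefficients with zero divergence (each $X_j$ and $Y_j$ is divergence-free as a vector field on $\mathbb{H}^n$), the transpose is simply $X_j^{t}=-X_j$, so $(X_jF)(g)=F(X_j g)$. I would then unwind the definition of homogeneity of degree $\lambda-1$ for $X_jF$: I must verify that
\begin{equation*}
(X_jF)\bigl(r^{-Q}g\circ\delta_{r^{-1}}\bigr)=r^{\lambda-1}(X_jF)(g).
\end{equation*}
Using $(X_jF)(h)=F(X_j h)$ with $h=r^{-Q}\,g\circ\delta_{r^{-1}}$, and applying the scaling identity for $X_j$ from the first step (with $r$ replaced by $r^{-1}$), the quantity $X_j\bigl(g\circ\delta_{r^{-1}}\bigr)$ produces a factor $r^{-1}\,(X_jg)\circ\delta_{r^{-1}}$. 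Feeding this back into the homogeneity hypothesis for $F$ applied to the test function $X_j g$ then yields exactly the factor $r^\lambda$, and combining it with the extra $r^{-1}$ gives $r^{\lambda-1}$, as required.

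I do not expect any serious obstacle here; this is a definitional verification, and the only place to be careful is bookkeeping of the dilation factors and the normalizing power $r^{-Q}$ built into the definition of distributional homogeneity, so that the $r^{-Q}$ from the test-function normalization threads through unchanged and does not interfere with the degree shift. The main step to get right is the scaling identity $X_j(g\circ\delta_r)=r\,(X_jg)\circ\delta_r$, since everything downstream is a mechanical substitution; I would present that identity explicitly and then note that the argument for $Y_j$ is identical by symmetry, replacing $x_j,y_j$ roles appropriately. Finally I would remark that this gives the $1\le j\le n$ conclusion for all $2n$ horizontal fields uniformly, completing the proof.
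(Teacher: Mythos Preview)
Your argument is correct and complete. The paper itself does not supply a proof of this proposition: it is quoted verbatim as Proposition~8.1 of Folland--Stein and simply cited. So there is no ``paper's own proof'' to compare against; you are filling in a standard fact the paper takes from the literature.

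One small bookkeeping point: with the convention that the formal transpose $X_j^{t}$ is defined by $\int (X_j f)\,g = \int f\,(X_j^{t}g)$, divergence-freeness gives $X_j^{t}=-X_j$ and hence $(X_jF)(g)=F(X_j^{t}g)=-F(X_jg)$, rather than $+F(X_jg)$ as you wrote. This sign discrepancy is harmless for the homogeneity computation, since it passes through the whole chain unchanged and cancels at the end; but you may want to state the convention cleanly to avoid confusion. Everything else---the scaling identity $X_j(g\circ\delta_r)=r\,(X_jg)\circ\delta_r$, the tracking of the $r^{-Q}$ normalization, and the final degree count---is exactly right.
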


Similar to Lemma 8.10 of \cite{Folland-Stein1974} and Proposition 1.15 of \cite{Folland1975}, we have the following result:
\begin{proposition}\label{pro 1.15 of Folland}
Let $f$ be a homogeneous function of degree $\lambda\ (\lambda\in\mathbb{R})$ which is $C^2$ away from $0$. There exists a constant $C>0$ such that
\begin{gather*}
    |f(\xi\eta)-f(\xi)|\leq C|\eta||\xi|^{\lambda-1},\quad\text{ whenever }\quad |\eta|\leq\frac 12|\xi|,\\
    |f(\xi\eta)+f(\xi\eta^{-1})-2f(x)|\leq C|\eta|^2|\xi|^{\lambda-2},\quad\text{ whenever }\quad |\eta|\leq\frac 12|\xi|.
\end{gather*}
\end{proposition}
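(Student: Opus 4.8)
The plan is to reduce both estimates to the unit sphere $\{|\xi|=1\}$ by exploiting homogeneity, and then to invoke ordinary (Euclidean) Taylor expansion on a fixed compact set away from the origin. First I would dispose of the trivial case $\xi=0$ (which forces $\eta=0$) and otherwise set $r=|\xi|>0$, $\tilde\xi=\delta_{1/r}(\xi)$ and $\tilde\eta=\delta_{1/r}(\eta)$, so that $|\tilde\xi|=1$ and $|\tilde\eta|=|\eta|/r\le\frac12$. Since the dilations $\delta_r$ are group automorphisms, $\delta_r(\tilde\xi\tilde\eta^{\pm1})=\xi\eta^{\pm1}$, and the degree-$\lambda$ homogeneity of $f$ gives $f(\xi\eta^{\pm1})=r^\lambda f(\tilde\xi\tilde\eta^{\pm1})$ and $f(\xi)=r^\lambda f(\tilde\xi)$. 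Consequently both claims are equivalent to the scale-invariant statements
\[
|f(\tilde\xi\tilde\eta)-f(\tilde\xi)|\le C|\tilde\eta|,\qquad |f(\tilde\xi\tilde\eta)+f(\tilde\xi\tilde\eta^{-1})-2f(\tilde\xi)|\le C|\tilde\eta|^2,
\]
valid for $|\tilde\xi|=1$ and $|\tilde\eta|\le\frac12$; multiplying back by $r^\lambda$ and using $|\tilde\eta|=|\eta|/r$ turns the right-hand sides into $C|\eta||\xi|^{\lambda-1}$ and $C|\eta|^2|\xi|^{\lambda-2}$, exactly the desired bounds.

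Next I would pass to Euclidean coordinates. Writing $\tilde\xi=(\tilde z,\tilde t)$ and $\tilde\eta=(w,s)$, the group law gives the Euclidean displacement $P:=\tilde\xi\tilde\eta-\tilde\xi=(w,\,s+2\,\mathrm{Im}(\tilde z\cdot\overline w))$, and, since $\tilde\eta^{-1}=(-w,-s)$, a direct computation shows the key algebraic cancellation $\tilde\xi\tilde\eta^{-1}-\tilde\xi=-P$. Because $|\tilde\xi|=1$ we have $|\tilde z|\le1$, and $|\tilde\eta|\le\frac12$ forces $|w|\le|\tilde\eta|$ and $|s|\le|\tilde\eta|^2$; hence $\|P\|\le C|\tilde\eta|$ in the Euclidean norm. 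Using Proposition~\ref{pro compara norms} and Proposition~\ref{Pro triangle ineq} one checks that $\tilde\xi$, $\tilde\xi\tilde\eta^{\pm1}$ and the straight Euclidean segments joining $\tilde\xi$ to $\tilde\xi\pm P$ all remain in a fixed compact annulus $K\subset\mathbb{H}^n\setminus\{0\}$; on $K$ the function $f$ is $C^2$, so $f$ together with its first and second Euclidean derivatives is bounded by a constant $M=M(f)$.

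The two estimates then fall out of Taylor's theorem on $K$. For the first, the mean value theorem gives $|f(\tilde\xi+P)-f(\tilde\xi)|\le M\|P\|\le C|\tilde\eta|$. For the second, I would Taylor-expand to second order at $\tilde\xi$ along $\pm P$; the identity $\tilde\xi\tilde\eta^{-1}-\tilde\xi=-P$ makes the first-order terms $\nabla f(\tilde\xi)\cdot(\pm P)$ cancel upon addition, leaving only the quadratic remainders, so $|f(\tilde\xi+P)+f(\tilde\xi-P)-2f(\tilde\xi)|\le M\|P\|^2\le C|\tilde\eta|^2$. Rescaling as in the first paragraph completes the proof.

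I expect the main obstacle to be careful bookkeeping rather than conceptual depth: one must verify that the cross term $2\,\mathrm{Im}(\tilde z\cdot\overline w)$ produced by the noncommutative group law is genuinely $O(|\tilde\eta|)$, so that the anisotropic scaling of the $t$-variable does not spoil the first-order bound, and that the connecting segments stay uniformly away from the origin so that the $C^2$ bounds on $K$ apply — this last point relies on the equivalence of the homogeneous and Euclidean norms together with the (quasi-)triangle inequality. The exact cancellation $\tilde\xi\tilde\eta^{-1}-\tilde\xi=-P$ is what makes the second-order estimate emerge with essentially no extra work.
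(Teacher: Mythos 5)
Your argument is essentially the paper's: normalize to $|\xi|=1$ by homogeneity, observe the exact Euclidean identity $\xi\eta^{-1}-\xi=-(\xi\eta-\xi)$ so that the two displacements are opposite vectors, and apply Taylor's theorem on a compact set away from the origin (the paper handles only the second inequality this way, citing Folland--Stein for the first). In fact your second-order expansion, which produces $C\|P\|^2\le C|\eta|^2$ after the first-order terms cancel, supplies the square that the paper's final chain of inequalities ($\le C\|\xi\eta-\xi\|\le C\|\eta\|\le C|\eta|$) omits, so your write-up actually delivers the claimed $|\eta|^2|\xi|^{\lambda-2}$ bound where the paper's displayed estimate as written only yields $O(|\eta|)$.
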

\begin{proof}
The first result can be found in Lemma 8.10 of \cite{Folland-Stein1974}, which had be generalized to nilpotent Lie groups, i.e., Proposition 1.15 of \cite{Folland1975}. For completeness, we will give the proof of the second result.

By homogeneity, we can assume that $|\xi|=1$ and $|\eta|\leq 1/2$. Since $\xi-(\xi\eta-\xi)=\xi\eta^{-1}$, $f\in C^2(\mathbb{H}^n\backslash\{0\})$ and the smooth property of the mapping $\eta\mapsto\xi\eta$, we have
\begin{align*}
    &|f(\xi\eta)+f(\xi\eta^{-1})-2f(\xi)|\\
    =&|f(\xi+(\xi\eta-\xi))+f(\xi-(\xi\eta-\xi))-2f(\xi)|\\
    \leq & C\|\xi\eta-\xi\|\leq C\|\eta\|\leq C|\eta|,
\end{align*}
where the last inequality is deduced by Proposition \ref{pro compara norms}.
\end{proof}

Similar to \cite{Garofalo-Lanconelli1992}, we introduce the $\delta$-starshaped domain as follows:
\begin{definition}[Definition 2.1 of \cite{Garofalo-Lanconelli1992}]
Given a piecewise $C^1$ open set $\Omega\not\equiv\mathbb{H}^n$ and $(0,0)\in \Omega$, we say that it is $\delta$-starshaped with respect to origin if and only if
\begin{equation}\label{starshap condition 1}
    E\cdot \nu>0
\end{equation}
holds at every point of the boundary $\partial \Omega$, where $\nu$ is the outer normal to the boundary $\partial\Omega$ and the vector field $E$ is defined as
\begin{equation}\label{vector field}
    E=x\frac{\partial}{\partial x}+y\frac{\partial}{\partial y}+2t\frac{\partial}{\partial t}.
\end{equation}
\end{definition}

\begin{remark}\label{starshaped equiv}
It is easy to verify that condition \eqref{starshap condition 1} is equivalent to the following condition: if the point $\xi=(z,t)\in\Omega$, then
\begin{equation}\label{starshap condition 2}
    \delta_\lambda(\xi)=(\lambda z,\lambda^2 t)\in\Omega \quad\text{for}\quad \forall\lambda\in[0,1].
\end{equation}
\end{remark}

Following, we introduce the convolution on the $\mathbb{H}^n$ and their properties. More details can be found in \cite{Folland-Stein1974} and the references therein.

\begin{definition}[Convolution]
The convolution of two functions $f,g$ on $\mathbb{H}^n$ is defined by
    $$f*g(\xi)=\int f(\eta)g(\eta^{-1}\xi)d\eta=\int f(\xi\eta^{-1})g(\eta) d\eta.$$
If $f\in C_0^\infty$ and $G\in \mathscr{D}'$, we define the $C^\infty$ function $G*f$ and $f*G$ by
    $$G*f(\xi)=G(f(\eta^{-1}\xi)), \quad f*G(\xi)=G(f(\xi\eta^{-1}).$$
\end{definition}

\begin{definition}[Regular distribution]\label{def regular distribution}
A distribution $F$ is said to be regular if there exists a function $f$ which is $C^\infty$ on $\mathbb{H}^n\backslash\{0\}$ such that $F(g)=\int fg dV$ for all $g\in C_0^\infty(\mathbb{H}^n\backslash\{0\})$.
\end{definition}

\begin{proposition}[Proposition 8.7 of \cite{Folland-Stein1974}]\label{pro 8.7 of FS}
If $F$ is a regular homogeneous distribution of degree $\lambda,\ -Q<\lambda<0$, then the mapping $g\rightarrow g*F$ extends to a bounded mapping from $L^p$ to $L^q$, where $q^{-1}=p^{-1}-\lambda/Q-1$ provided $1<p<q<\infty$, and from $L^1$ to $L^{-Q/\lambda-\epsilon}(loc)$ for any $\epsilon>0$.
\end{proposition}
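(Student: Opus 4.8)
The plan is to recognize the statement as a Hardy--Littlewood--Sobolev (fractional integration) theorem for an abstract homogeneous kernel, to reduce it to the model Riesz potential on $\mathbb{H}^n$, and then to prove that potential bound by a Hedberg-type pointwise inequality combined with the boundedness of the Hardy--Littlewood maximal operator.

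First I would reduce to a concrete kernel. Since $F$ is a regular homogeneous distribution of degree $\lambda$, by Definition \ref{def regular distribution} it is represented by a function $f$ that is $C^\infty$ on $\mathbb{H}^n\setminus\{0\}$ and satisfies $f(\delta_r\xi)=r^\lambda f(\xi)$. Restricting $f$ to the compact unit sphere $\{|\xi|=1\}$ and using homogeneity gives $|f(\xi)|\le C_0|\xi|^\lambda$ for all $\xi\ne 0$; because $\lambda>-Q$, the estimate $\int_{|\xi|\le 1}|\xi|^\lambda\,d\xi\sim\int_0^1 r^{\lambda+Q-1}\,dr<\infty$ shows $f\in L^1_{loc}$, so $g*F(\xi)=\int g(\xi\eta^{-1})f(\eta)\,d\eta$ is well defined for $g\in C_0^\infty$ and is dominated pointwise by $|g*F(\xi)|\le C_0\, I_\alpha(|g|)(\xi)$, where $\alpha:=Q+\lambda\in(0,Q)$ and
\[
I_\alpha h(\xi)=\int_{\mathbb{H}^n}|\xi\eta^{-1}|^{-(Q-\alpha)}h(\eta)\,d\eta
\]
(here I use unimodularity and inversion-invariance of Lebesgue measure on $\mathbb{H}^n$ to pass between the two convolution forms). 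Thus everything reduces to proving that $I_\alpha$ maps $L^p$ into $L^q$ with $q^{-1}=p^{-1}-\alpha/Q$ for $1<p<q<\infty$, and is weak type $(1,Q/(Q-\alpha))$.

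The heart of the argument is a pointwise Hedberg inequality. Fixing $\delta>0$, I split $I_\alpha h(\xi)$ according to $|\xi\eta^{-1}|\le\delta$ and $|\xi\eta^{-1}|>\delta$. Decomposing the near region into dyadic annuli $\delta2^{-k-1}<|\xi\eta^{-1}|\le\delta2^{-k}$ and using the volume growth $|B_r|\sim r^Q$ of balls in the quasi-metric $|\eta^{-1}\xi|$ (Proposition \ref{Pro triangle ineq} guarantees $\mathbb{H}^n$ is a space of homogeneous type), the near part is bounded by $\lesssim\delta^{\alpha}Mh(\xi)$, where $M$ is the Hardy--Littlewood maximal operator on $\mathbb{H}^n$. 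For the far part, Hölder's inequality in $\eta$ gives $\lesssim\delta^{\alpha-Q/p}\|h\|_p$, the tail integral $\int_{|\eta|>\delta}|\eta|^{-(Q-\alpha)p'}\,d\eta$ converging precisely because $q<\infty$ forces $p<Q/\alpha$. Optimizing over $\delta$ so that the two terms balance yields $I_\alpha h(\xi)\lesssim (Mh(\xi))^{p/q}\|h\|_p^{1-p/q}$. Since $(\mathbb{H}^n,|\cdot|,d\xi)$ is doubling, $M$ is bounded on $L^p$ for $p>1$; raising the pointwise bound to the $q$-th power and integrating then gives $\|I_\alpha h\|_q\lesssim\|Mh\|_p^{p/q}\|h\|_p^{1-p/q}\lesssim\|h\|_p$, which is the claimed $L^p\to L^q$ estimate.

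For the endpoint $p=1$ the optimization fails because $M$ is only weak type $(1,1)$, so I would instead run the same near/far splitting with the weak-$(1,1)$ maximal bound to obtain directly $|\{\,I_\alpha h>t\,\}|\lesssim(\|h\|_1/t)^{q_0}$ with $q_0=Q/(Q-\alpha)=-Q/\lambda$; since a weak-$L^{q_0}$ function lies in $L^r$ on every bounded set for each $r<q_0$, this yields the mapping $L^1\to L^{-Q/\lambda-\epsilon}(loc)$. The main obstacle is the maximal-function input underlying the two middle steps: one must develop Hardy--Littlewood maximal theory on $\mathbb{H}^n$ for the quasi-metric $|\eta^{-1}\xi|$, which is only a quasi-norm (the triangle inequality of Proposition \ref{Pro triangle ineq} carries a constant $C\ge1$). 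Establishing the Vitali-type covering lemma and the weak-$(1,1)$ and $L^p$ bounds for $M$ in this quasi-metric, doubling setting is the real technical heart, though it is by now standard for spaces of homogeneous type.
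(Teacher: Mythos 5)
The paper offers no proof of this proposition: it is quoted, with citation, as Proposition 8.7 of \cite{Folland-Stein1974}, so there is no internal argument to compare yours against. On its own terms your proof is correct and is one of the standard modern proofs of the fractional integration theorem on a space of homogeneous type. The reduction to the model kernel is sound; the one point worth making explicit is why the regular distribution $F$ may be identified with integration against its density $f$ on \emph{all} of $C_0^\infty(\mathbb{H}^n)$ rather than only on test functions supported away from the origin (as in the paper's Definition \ref{def regular distribution}): since $\lambda>-Q$ the bound $|f|\le C_0|\xi|^\lambda$ makes $f$ locally integrable, and the difference $F-f\,d\xi$ is then a $\lambda$-homogeneous distribution supported at $\{0\}$, hence a combination of derivatives of $\delta$, whose homogeneity degrees are all $\le -Q<\lambda$; so it vanishes. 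After that, the Hedberg inequality $I_\alpha h\lesssim (Mh)^{p/q}\|h\|_p^{1-p/q}$ together with maximal function theory for the gauge balls (Lebesgue measure of a gauge ball of radius $r$ is exactly $c_Q r^Q$ by dilation, and Proposition \ref{Pro triangle ineq} supplies the quasi-triangle inequality, so the Vitali covering argument and hence the weak $(1,1)$ and $L^p$ bounds for $M$ go through) yields both the strong $(p,q)$ estimate and, via the same near/far splitting at the level of distribution functions, the weak-type $(1,Q/(Q-\alpha))$ endpoint and thus the $L^1\to L^{-Q/\lambda-\epsilon}(\mathrm{loc})$ claim. For comparison, the original argument in \cite{Folland-Stein1974} instead exploits that $|\xi|^{\lambda}$ lies in weak $L^{-Q/\lambda}$ and applies a generalized (weak-type) Young inequality obtained by Marcinkiewicz-style interpolation; your maximal-function route is equivalent in strength and has the advantage of producing the endpoint weak-type bound directly from the same pointwise decomposition.
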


\begin{remark}
If the distribution $F$ is taken as $|\xi|^{\alpha-Q}(0<\alpha<Q)$, then for $1<p<q<+\infty$ and $\frac 1q=\frac 1p-\frac\alpha Q$, the above result can be specified as
    $$\|g*|\xi|^{\alpha-Q}\|_{L^q}\leq C\|g\|_{L^p}.$$
It  is just a  dual form  of HLS inequalities \eqref{class-HLS-1}.
\end{remark}

On $\mathbb{H}^n$, let $C^\beta,\ 0<\beta<\infty$ be the classical Lipschitz spaces of order $\beta$, namely, they are defined in terms of the Euclidean norm. In \cite{Folland-Stein1974}, they introduced the following family $\Gamma_\beta$ of Lipschitz spaces with respect to the norm $|\cdot|$ on the Heisenberg group.
\begin{definition}[Lipschitz spaces]
i) For $0<\beta<1$,
    $$\Gamma_\beta=\{f\in L^\infty\cup C:\ \sup_{\xi,\eta} \frac{|f(\xi\eta)-f(\xi)|} {|\eta|^\beta}<\infty\};$$
ii) For $\beta=1$,
    $$\Gamma_1=\{f\in L^\infty\cup C:\ \sup_{\xi,\eta} \frac{|f(\xi\eta)+f(\xi\eta^{-1})-2f(\xi)|} {|\eta|}<\infty\};$$
iii) For $\beta=k+\beta'$, where $k$ is a positive integer and $0<\beta'\le 1$,
    $$\Gamma_\beta=\{f\in L^\infty\cup C:\ f\in \Gamma_{\beta'}\ \text{and}\  Df\in \Gamma_{\beta'}\ \text{for all}\ D\in\mathscr{B}_k\}$$
where
$$\mathscr{B}_k=\{L_{a_1}L_{a_2}\cdots L_{a_j}:\ 1\le a_i\le 2n, i=1,2,\cdots,j,\ j\le k\}$$
with $L_j=X_j$ and $L_{j+n}=Y_j$ for $j=1,2,\cdots,n$.
\end{definition}
%

\begin{proposition}[Theorem 20.1 of \cite{Folland-Stein1974}]\label{thm 20.1 of FS}
$\Gamma_\beta\subset C^{\beta/2}(\text{loc})$ for $0<\beta<\infty$.
\end{proposition}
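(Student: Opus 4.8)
The plan is to reduce the statement, for every $\beta$, to an elementary comparison between the homogeneous norm $|\cdot|$ and the Euclidean norm $\|\cdot\|$, and then to treat the integer part of $\beta$ by induction. The factor $1/2$ in the exponent reflects the vertical $t$-direction alone: a Euclidean displacement of size $r$ in $t$ costs only $|\eta|\sim r^{1/2}$ in the homogeneous norm, so a Heisenberg-H\"older bound of order $\beta$ degrades to Euclidean order $\beta/2$ there, whereas the horizontal directions retain a better order; since a H\"older modulus is dictated by its worst direction, the outcome is $\beta/2$. Two geometric facts set up the reduction. First, writing $\xi'=\xi\eta$ with $\eta=\xi^{-1}\xi'$, a direct computation of the group law shows that $\xi\eta$ and $\xi\eta^{-1}$ are Euclidean-symmetric about $\xi$, their Euclidean midpoint being exactly $\xi$. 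Second, on any bounded set there is a constant $C$ with $\|\xi^{-1}\xi'\|\le C\|\xi'-\xi\|$, since the correction term in the $t$-slot of $\xi^{-1}\xi'$ is bounded by $2|z|\,|z'-z|\le C\|\xi'-\xi\|$. Combining these with Proposition \ref{pro compara norms} gives, for $\xi,\xi'$ close, the bridge $|\xi^{-1}\xi'|\le\|\xi^{-1}\xi'\|^{1/2}\le C\|\xi'-\xi\|^{1/2}$.

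For $0<\beta\le 1$ I would use this bridge directly. When $0<\beta<1$, the definition gives $|f(\xi')-f(\xi)|=|f(\xi\eta)-f(\xi)|\le C|\eta|^\beta\le C\|\xi'-\xi\|^{\beta/2}$ for nearby $\xi,\xi'$, so $f\in C^{\beta/2}(\mathrm{loc})$. When $\beta=1$, the first geometric fact turns the defining second-difference condition into the Euclidean one $|f(\xi+h)+f(\xi-h)-2f(\xi)|\le C|\eta|$ with $h=\xi\eta-\xi$, and the norm estimates give $|\eta|\le C\|h\|^{1/2}$, hence $|f(\xi+h)+f(\xi-h)-2f(\xi)|\le C\|h\|^{1/2}$. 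Because $1/2$ is not an integer, the classical second-difference characterization of H\"older spaces (the H\"older--Zygmund space of order $1/2$ coincides with $C^{1/2}$) upgrades this to $f\in C^{1/2}(\mathrm{loc})$.

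For $\beta=k+\beta'$ with $k\ge1$ I would induct on $k$, the case $k=0$ being the base case above. The definition of $\Gamma_\beta$ is self-improving under horizontal differentiation: if $f\in\Gamma_{k+\beta'}$ then each generator satisfies $L_af\in\Gamma_{(k-1)+\beta'}$, since $L_af\in\Gamma_{\beta'}$ and $(D'L_a)f\in\Gamma_{\beta'}$ for every $D'\in\mathscr{B}_{k-1}$ (as $D'L_a\in\mathscr{B}_k$). By the inductive hypothesis each horizontal derivative $X_jf,Y_jf$ lies in $C^{(\beta-1)/2}(\mathrm{loc})$, and the base case gives $f\in C^{\beta'/2}(\mathrm{loc})$. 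It remains to convert this horizontal-derivative information into a Euclidean H\"older modulus of order $\beta/2$ for $f$ itself.

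This conversion is the main obstacle. The difficulty is genuine: $X_j=\partial_{x_j}+2y_j\partial_t$ and $Y_j=\partial_{y_j}-2x_j\partial_t$ are not pure Euclidean partials, so bounding $X_jf,Y_jf$ does not separately bound $\partial_{x_j}f,\partial_{y_j}f,\partial_tf$, and the $t$-direction is recovered only through the commutator $[X_j,Y_j]=-4\partial_t$. Conceptually $f\in\Gamma_\beta$ is a H\"older condition of order $\beta$ in the Carnot--Carath\'eodory metric, which is comparable to $|\xi^{-1}\xi'|$, and the theorem converts it to the Euclidean one through the comparison $\|\xi'-\xi\|\le C|\xi^{-1}\xi'|\le C\|\xi'-\xi\|^{1/2}$ underlying Proposition \ref{pro compara norms}; the factor $1/2$ appears precisely because the vertical direction is reached by one bracketing of the horizontal fields. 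Integrating the horizontal bounds along horizontal paths while tracking how the exponent degrades in the $t$-slot, so that the final Euclidean exponent is exactly $\beta/2$ and no better, is the technical core; the purely horizontal, first-order estimates are routine by comparison.
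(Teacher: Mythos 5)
The paper does not actually prove this statement: it is quoted as Theorem 20.1 of \cite{Folland-Stein1974} and used as a black box, so there is no in-paper argument to compare against. Judged on its own terms, your proposal is essentially complete for $0<\beta\le 1$: the local bridge $|\xi^{-1}\xi'|\le\|\xi^{-1}\xi'\|^{1/2}\le C\|\xi'-\xi\|^{1/2}$ (from Proposition \ref{pro compara norms} and the group law) gives $C^{\beta/2}$ directly when $\beta<1$, and for $\beta=1$ your observation that $\xi$ is the exact Euclidean midpoint of $\xi\eta$ and $\xi\eta^{-1}$ correctly converts the defining second difference into a Euclidean Zygmund condition of order $1/2$, which the classical identification of the non-integer Zygmund class with the H\"older class (Marchaud's inequality) upgrades to $C^{1/2}(\mathrm{loc})$.

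The genuine gap is the case $\beta>1$, and you name it yourself: after the induction reduces matters to $X_jf,\,Y_jf\in C^{(\beta-1)/2}(\mathrm{loc})$ and $f\in C^{\beta'/2}(\mathrm{loc})$, you state that converting this into $f\in C^{\beta/2}(\mathrm{loc})$ is ``the main obstacle'' and ``the technical core'' but do not carry it out --- and this is exactly where all the work in Folland--Stein's proof lies. The missing step does not follow formally from what you have: you must recover H\"older regularity of order $\beta/2$ in the $t$-direction from the horizontal fields alone (via the commutator $[X_j,Y_j]=-4\partial_t$, i.e.\ by integrating around small horizontal loops whose endpoints differ by a vertical displacement of size comparable to the square of the loop's width), and you must control the Euclidean partials $\partial_{x_j}f,\partial_{y_j}f,\partial_tf$ rather than the vector fields $X_jf,Y_jf$ themselves. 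Note also that the base case only yields the exponent $\beta'/2$, which is strictly weaker than the target $\beta/2$ once $k\ge 1$, so the entire burden of the inductive step rests on the unproved conversion. As written, the proposal is a correct reduction together with an accurate description of the hard part, but not a proof.
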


%
%
%
%

Notation: for any function $f(\xi)$ defined on $\Omega$, we always use $\tilde f(\xi)$ to represent its trivial extension in $\mathbb{H}^n$, namely,
\begin{eqnarray*}
    \tilde{f}(\xi)= \begin{cases}f(\xi)&\quad \xi\in\Omega,\\
    0&\quad \xi\in\mathbb{H}^n\backslash\Omega. \end{cases}
\end{eqnarray*}
And
    $$
    I_\alpha f(\xi)=\int_{\mathbb{H}^n}\frac {f(\eta)}{|\eta^{-1}\xi|^{Q-\alpha}} dy, \ \ \ \ I_{\alpha, \Omega} f(\xi)=\int_{\Omega}\frac {f(\eta)}{|\eta^{-1}\xi|^{Q-\alpha}} dy.
    $$
We also denote that $c,C$ different positive constant.

\section{Nonexistence for critical and supcritical case}\label{Sec subcri-critical non}

In this section, we mainly devote to discuss the nonexistence result for the critical case. Firstly, we derive energy estimate \eqref{energy}, and show that the  supremum $D_{n,\alpha}(\Omega) $ is not achieved by any function on any domain $\Omega \ne \mathbb{H}^n$. Then, we establish a Pohozaev type identity for integral equation, which deduce the third part of Theorem \ref{main}.

\begin{proposition}\label{prop2-1} For any domain $\Omega \subset \mathbb{H}^n$,
$D_{n,\alpha}(\Omega)=D_{n,\alpha}$; further the supremum $D_{n,\alpha}(\Omega)$  is not achieved by any function in $L^{q_\alpha}(\Omega)$ on any domain $\Omega \ne \mathbb{H}^n$.
\end{proposition}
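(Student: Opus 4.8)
The plan is to establish the two assertions separately: the identity $D_{n,\alpha}(\Omega)=D_{n,\alpha}$ by a concentration argument, and non-attainment by invoking the rigidity in the equality case of Theorem \ref{HLS Hn}. For the easy bound $D_{n,\alpha}(\Omega)\le D_{n,\alpha}$ I would take any $f\in L^{q_\alpha}(\Omega)\setminus\{0\}$, pass to its trivial extension $\tilde f$, and observe that $\int_\Omega\int_\Omega f(\xi)|\eta^{-1}\xi|^{-(Q-\alpha)}f(\eta)\,d\xi\,d\eta=\int_{\mathbb{H}^n}\int_{\mathbb{H}^n}\tilde f|\eta^{-1}\xi|^{-(Q-\alpha)}\tilde f$ together with $\|\tilde f\|_{L^{q_\alpha}(\mathbb{H}^n)}=\|f\|_{L^{q_\alpha}(\Omega)}$; the sharp inequality \eqref{class-HLS} then bounds the quotient by $D_{n,\alpha}$, whence $D_{n,\alpha}(\Omega)\le D_{n,\alpha}$. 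The content lies in the reverse inequality.

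First I would record that the quotient defining $D_{n,\alpha}(\cdot)$ is invariant under left translations $\xi\mapsto\zeta\xi$ and under the dilations $\delta_r$: since the measure is left-invariant, $|(\zeta\eta)^{-1}(\zeta\xi)|=|\eta^{-1}\xi|$ and $|\delta_r\xi|=r|\xi|$, a change of variables shows that both the double integral and $\|\cdot\|_{L^{q_\alpha}}^2$ scale by compensating powers of $r$, leaving the ratio unchanged; in particular $H$, and hence any $H(\delta_r(\zeta^{-1}\xi))$, is a free-space extremal. Fixing $\xi_0\in\Omega$ and $\rho>0$ with $\{\xi:|\xi_0^{-1}\xi|<\rho\}\subset\Omega$, I set $H_r(\xi)=H(\delta_r(\xi_0^{-1}\xi))$ with $H$ as in \eqref{HLS-ex1}; these concentrate at $\xi_0$ as $r\to\infty$. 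Using $f_r=H_r|_\Omega$ as a trial function I would compare $\int_\Omega\int_\Omega$ with the full double integral over $\mathbb{H}^n\times\mathbb{H}^n$; the discrepancy is controlled by twice the cross term $\int_{\Omega^c}H_r\,(I_\alpha H_r)$, which by \eqref{class-HLS} is at most $D_{n,\alpha}\|H_r\chi_{\Omega^c}\|_{q_\alpha}\|H_r\|_{q_\alpha}$.

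The key estimate is that this cross term is negligible: with $\zeta=\delta_r(\xi_0^{-1}\xi)$ and $\Omega^c\subset\{\xi:|\xi_0^{-1}\xi|\ge\rho\}$ one gets $\|H_r\chi_{\Omega^c}\|_{q_\alpha}^{q_\alpha}\le r^{-Q}\int_{|\zeta|>r\rho}|H|^{q_\alpha}$, while $\|H_r\|_{q_\alpha}^{q_\alpha}=r^{-Q}\|H\|_{q_\alpha}^{q_\alpha}$, so $\|H_r\chi_{\Omega^c}\|_{q_\alpha}/\|H_r\|_{q_\alpha}\to 0$ as $r\to\infty$ because $H\in L^{q_\alpha}(\mathbb{H}^n)$. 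Hence $\int_\Omega\int_\Omega H_r|\eta^{-1}\xi|^{-(Q-\alpha)}H_r=(D_{n,\alpha}-o(1))\|H_r\|_{q_\alpha}^2$, and since $\|f_r\|_{L^{q_\alpha}(\Omega)}^2\le\|H_r\|_{q_\alpha}^2$ the quotient for $f_r$ is $\ge D_{n,\alpha}-o(1)$; letting $r\to\infty$ gives $D_{n,\alpha}(\Omega)\ge D_{n,\alpha}$, completing the identity. I expect this tail/cross-term estimate, and the verification $|H|^{q_\alpha}\sim|\xi|^{-2Q}$ at infinity that makes it integrable and the leak vanish, to be the main technical obstacle.

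For non-attainment, suppose some $f\in L^{q_\alpha}(\Omega)$ realized $D_{n,\alpha}(\Omega)=D_{n,\alpha}$; replacing $f$ by $|f|$ (which does not decrease the numerator, whose kernel is positive, and leaves the denominator unchanged) I may assume $f\ge 0$. Then $\tilde f$ attains equality in \eqref{class-HLS} on all of $\mathbb{H}^n$, so by the rigidity in Theorem \ref{HLS Hn} it must be a positive multiple of some $H(\delta_r(\zeta^{-1}\xi))$, which is strictly positive a.e.\ on $\mathbb{H}^n$. This contradicts $\tilde f\equiv 0$ on $\mathbb{H}^n\setminus\Omega$, a set of positive Lebesgue measure whenever $\Omega\ne\mathbb{H}^n$ (in particular for bounded $\Omega$). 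Therefore $D_{n,\alpha}(\Omega)$ is not achieved.
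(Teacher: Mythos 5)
Your proposal is correct and follows essentially the same route as the paper: the trivial extension plus the sharp HLS inequality for $D_{n,\alpha}(\Omega)\le D_{n,\alpha}$, concentrating extremals $H(\delta_r(\xi_0^{-1}\xi))$ restricted to $\Omega$ for the reverse inequality, and the rigidity of the equality case in Theorem \ref{HLS Hn} for non-attainment. The only (harmless) variation is in the error terms: you drop the $\Omega^c\times\Omega^c$ contribution by positivity and bound the cross term via H\"older and the dual HLS estimate $\|H_r\chi_{\Omega^c}\|_{q_\alpha}\|H_r\|_{q_\alpha}$, whereas the paper evaluates the cross term through the Euler--Lagrange equation \eqref{EL-equ-1} and estimates the $\Omega^c\times\Omega^c$ term separately by HLS; both reduce to the same vanishing tail integral of $H^{q_\alpha}$.
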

\begin{proof}
If $f\in L^{q_\alpha}(\Omega)$, then $\tilde{f}\in L^{q_\alpha}(\mathbb{H}^n)$ . It follows that
\begin{align*}
    D_{n,\alpha}(\Omega) =&\sup\limits_{f\in L^{q_\alpha}(\Omega)\backslash\{0\}} \frac{\int_{\mathbb{H}^n}\int_{\mathbb{H}^n} \tilde{f}(\xi)|\eta^{-1}\xi|^{\alpha-Q} \tilde{f}(\eta) d\eta d\xi}{\|\tilde{f}\|_{L^{q_\alpha}(\mathbb{H}^n)}^2}\\
    \leq &\sup\limits_{g\in L^{q_\alpha}(\mathbb{H}^n)\backslash\{0\}} \frac{\int_{\mathbb{H}^n}\int_{\mathbb{H}^n} g(\xi)|\eta^{-1}\xi|^{\alpha-Q} g(\eta) d\eta d\xi}{\|g\|_{L^{q_\alpha}(\mathbb{H}^n)}^2}=D_{n,\alpha}.
\end{align*}

On the other hand,  recall  that $f(\xi)=H(\zeta^{-1}\xi)$ with $\zeta\in \mathbb{H}^n$ is an extremal function to the sharp HLS inequality in Theorem \ref{HLS Hn}, as well as its conformal equivalent class:
\begin{equation}\label{fe}
f_\epsilon(\xi)=\epsilon^{-\frac{Q+\alpha}2} H(\delta_{\epsilon^{-1}}(\zeta^{-1}\xi)),\quad \forall\epsilon>0.
\end{equation}
It is easy to verify
\[
\|I_\alpha f\|_{L^{p_\alpha}(\mathbb{H}^n)}=\|I_\alpha f_\epsilon\|_{L^{p_\alpha}(\mathbb{H}^n)},
\quad\|f\|_{L^{q_\alpha}(\mathbb{H}^n)} =\|f_\epsilon\|_{L^{q_\alpha}(\mathbb{H}^n)},
\]
and $f_\epsilon(\xi)$ satisfies integral equation
\begin{equation}\label{EL-equ-1}
f_\epsilon^\frac{Q-\alpha}{Q+\alpha}(\xi)=B\int_{\mathbb{H}^n} \frac{f_\epsilon(\eta)}{|\eta^{-1}\xi|^{Q-\alpha}}d\eta,
\end{equation}
where $B$ is a positive constant. Following, based on the extremal function $f_\epsilon$, we will choose a specific test function to prove that the reverse inequality holds too.

Choosse some point $\zeta\in \Omega$ and $R$ small enough so that $\Sigma_{R}(\zeta)\subset\Omega$, where $\Sigma_R(0)=\{\xi=(z,t)\in\mathbb{H}^n:\ |z|<R,\ |t|<R^2\}$ is a cylindrical set and $\Sigma_{R}(\zeta)=\zeta\circ\Sigma_R(0)$, and choose test function $g(\xi)\in L^{q_\alpha}(\mathbb{H}^n)$ as
 \begin{eqnarray*}
g(\xi)= \begin{cases}f_\epsilon(\xi) &\quad \xi\in \Sigma_{R}(\zeta)\subset\Omega,\\
 0&\quad \xi\in\mathbb{H}^n\backslash \Sigma_{R}(\zeta).
  \end{cases}
  \end{eqnarray*}
Then,
\begin{eqnarray*}
& &\int_{\Omega} \int_{\Omega} \frac{g(\xi)g(\eta)} {|\eta^{-1}\xi|^{Q-\alpha}} d\xi d\eta=\int_{\mathbb{H}^n} \int_{\mathbb{H}^n} \frac{ f_\epsilon(\xi) f_\epsilon(\eta)}{|\eta^{-1}\xi|^{Q-\alpha}} d\xi d\eta\\
& &-2\int_{\mathbb{H}^n} \int_{\Sigma_{R}^C(\zeta) } \frac{ f_\epsilon(\xi) f_\epsilon(\eta)}{|\eta^{-1}\xi|^{Q-\alpha}} d\xi d\eta
+\int_{\Sigma_{R}^C(\zeta) } \int_{\Sigma_{R}^C(\zeta) } \frac{ f_\epsilon(\xi) f_\epsilon(\eta)}{|\eta^{-1}\xi|^{Q-\alpha}} d\xi d\eta\\
&&=D_{n,\alpha}\|f_\epsilon\|^2_{L^{q_\alpha}(\mathbb{H}^n)}-I_1+I_2,
\end{eqnarray*}
where $\Sigma_{R}^C(\zeta)=\mathbb{H}^n\backslash \Sigma_{R}(\zeta)$ and
\[I_1:= 2\int_{\mathbb{H}^n} \int_{\Sigma_{R}^C(\zeta) } \frac{ f_\epsilon(\xi) f_\epsilon(\eta)}{|\eta^{-1}\xi|^{Q-\alpha}} d\xi d\eta, \quad
I_2:= \int_{\Sigma_{R}^C(\zeta) } \int_{\Sigma_{R}^C(\zeta) } \frac{ f_\epsilon(\xi) f_\epsilon(\eta)}{|\eta^{-1}\xi|^{Q-\alpha}} d\xi d\eta.
\]
It follows from \eqref{EL-equ-1} that
\begin{align*}
I_1 =
C\int_{\Sigma_{R}^C(\zeta)} f_\epsilon^{\frac{2Q}{Q+\alpha}}(\xi) d\xi =O(\frac{R}\epsilon)^{-Q},  \quad\quad\text{as}\quad\epsilon\to 0.
\end{align*}
By HLS inequality \eqref{class-HLS}, we have
\begin{eqnarray*}
I_2&\le&D_{n,\alpha}\|f_\epsilon\|^2_{L^{q_\alpha}(\Sigma_R^C(\zeta))}= O(\frac{R}\epsilon)^{-Q-\alpha}\quad\quad\text{as}\quad\epsilon\to 0.
\end{eqnarray*}
Combining the above, we arrive at
\begin{eqnarray*}
    D_{n,\alpha}(\Omega)&\ge &\frac{\int_{\Omega} \int_{\Omega} g(\xi)g(\eta)|\eta^{-1}\xi|^{\alpha-Q} d\xi d\eta}{\|g\|^2_{L^{q_\alpha}(\Omega)}}\\
    &=&\frac{D_{n,\alpha} \|f_\epsilon\|^2_{L^{q_\alpha}(\mathbb{H}^n)} -I_1+I_2} {\|f_\epsilon\|^2_{L^{q_\alpha}(\mathbb{H}^n)}}\\
    &=& D_{n,\alpha}-O(\frac{R}\epsilon)^{-Q}
\end{eqnarray*} with small enough $\epsilon$,
which yields $D_{n,\alpha}(\Omega)\ge D_{n,\alpha}$ as $\epsilon \to 0$.

Finally,  we show  that $D_{n,\alpha}(\Omega)$ is not achieved if $\Omega\neq\mathbb{H}^n$.
In fact, if $D_{n,\alpha}(\Omega)$ is attained by some function $u\in L^{q_\alpha}(\Omega)$, then $\tilde u\in L^{q_\alpha}(\mathbb{H}^n)$ would be an extremal function to the sharp HLS inequality on $\mathbb{H}^n$, which is impossible due to Theorem B.
\end{proof}
\medskip

Proposition \ref{prop2-1} indicates that there is no maximizing energy solution to \eqref{HB-1}. We shall show that there is not any non-trivial positive continuous solution to \eqref{HB-1} on any $\delta$-starshaped domain using the following Pohozaev identity. Without loss of generality, {\it in the rest of this section we always assume that the origin is in $\Omega$ and the domain is $\delta$-starshaped with respect to the origin.}

\begin{lemma}\label{poh}
If $f\in C^1(\overline \Omega)$ is a non-negative solution to
\begin{equation}\label{gen_equ}
f(\xi)=\int_\Omega\frac{f^{p-1}(\eta)}{|\eta^{-1}\xi|^{Q-\alpha}}d\eta+ \lambda \int_\Omega\frac{f^{p-1}(\eta)}{|\eta^{-1}\xi|^{Q-\alpha-1}}d\eta,\quad \xi\in \overline\Omega,
\end{equation}
where $p\ne 0$, $\lambda \in \mathbb{R}$, then
\begin{align}\label{Pohozaev identity}
    &(\frac{Q}{p}+\frac{\alpha-Q}2)\int_{\Omega}f^{p}(\xi)d\xi \nonumber\\ =& -\frac{\lambda}2 \int_{\Omega}\int_{\Omega} \frac{f^{p-1}(\xi)f^{p-1}(\eta)} {|\eta^{-1}\xi|^{Q-\alpha}}d\eta d\xi +\frac1{p}\int_{\partial \Omega}(E\cdot \nu) f^{p}(\xi)d\sigma,
\end{align}
where $\nu$ is the outward normal to $\partial \Omega$.
\end{lemma}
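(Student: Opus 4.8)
The plan is to use the infinitesimal generator of the anisotropic dilations $\delta_r$, i.e.\ the vector field $E$ from \eqref{vector field}, as a Pohozaev multiplier, and to evaluate the single quantity
\[
\mathcal I:=\int_\Omega (Ef)(\xi)\,f^{p-1}(\xi)\,d\xi
\]
in two different ways. Since $f\in C^1(\overline\Omega)$, $\mathcal I$ is an ordinary (non-singular) integral, and the identity will emerge by matching the two evaluations against one elementary testing relation. Multiplying \eqref{gen_equ} by $f^{p-1}(\xi)$ and integrating over $\Omega$ produces that relation,
\[
\int_\Omega f^p\,d\xi=\int_\Omega\int_\Omega\frac{f^{p-1}(\xi)f^{p-1}(\eta)}{|\eta^{-1}\xi|^{Q-\alpha}}\,d\eta d\xi+\lambda\int_\Omega\int_\Omega\frac{f^{p-1}(\xi)f^{p-1}(\eta)}{|\eta^{-1}\xi|^{Q-\alpha-1}}\,d\eta d\xi,
\]
which I denote $\int_\Omega f^p=B_1+\lambda B_2$.

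For the first (geometric) evaluation I would write $(Ef)f^{p-1}=\tfrac1p E(f^p)$ and apply the divergence theorem on the piecewise $C^1$ set $\Omega$. Since $E$ has constant divergence $\operatorname{div}E=n+n+2=2n+2=Q$ on $\mathbb{R}^{2n+1}$, this yields
\[
\mathcal I=\frac1p\int_{\partial\Omega}(E\cdot\nu)\,f^p\,d\sigma-\frac Qp\int_\Omega f^p\,d\xi,
\]
which already accounts for the boundary term and the coefficient $-Q/p$ in \eqref{Pohozaev identity}.

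For the second (integral-operator) evaluation I would insert \eqref{gen_equ} into $Ef$ and move $E_\xi$ under the integral signs, so that $E_\xi$ falls on the kernels $|\eta^{-1}\xi|^{-(Q-\alpha)}$ and $|\eta^{-1}\xi|^{-(Q-\alpha-1)}$. The crucial structural fact is that $(\xi,\eta)\mapsto|\eta^{-1}\xi|$ is homogeneous of degree $1$ under the joint dilation $(\xi,\eta)\mapsto(\delta_r\xi,\delta_r\eta)$—because $\delta_r$ is a group automorphism, $\delta_r(\eta^{-1}\xi)=(\delta_r\eta)^{-1}(\delta_r\xi)$—so Euler's relation gives $(E_\xi+E_\eta)|\eta^{-1}\xi|^{-\beta}=-\beta|\eta^{-1}\xi|^{-\beta}$. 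Decomposing $E_\xi=\tfrac12(E_\xi+E_\eta)+\tfrac12(E_\xi-E_\eta)$, the symmetric half produces $-\tfrac\beta2$ times the kernel, while the antisymmetric half integrates to zero against the symmetric density $f^{p-1}(\xi)f^{p-1}(\eta)$ (using $|\eta^{-1}\xi|=|\xi^{-1}\eta|$ and relabelling $\xi\leftrightarrow\eta$). This gives $\mathcal I=-\tfrac{Q-\alpha}2 B_1-\lambda\tfrac{Q-\alpha-1}2 B_2$. Equating the two expressions for $\mathcal I$ and using the testing relation to eliminate the critical form $B_1$, that form cancels exactly, and the lower-order forms combine with coefficient $\tfrac12[(Q-\alpha-1)-(Q-\alpha)]=-\tfrac12$; rearranging produces \eqref{Pohozaev identity}, with the surviving bilinear form carried by the second kernel $|\eta^{-1}\xi|^{-(Q-\alpha-1)}$ and overall factor $-\lambda/2$.

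The hard part will be making the second evaluation rigorous: differentiating under the integral sign and justifying the symmetrization when the kernels are singular on the diagonal $\xi=\eta$ and $f$ is only $C^1$ up to $\partial\Omega$. I would handle this by regularizing, e.g.\ replacing $|\eta^{-1}\xi|^{-\beta}$ by $(|\eta^{-1}\xi|^2+\varepsilon^2)^{-\beta/2}$ (or excising a shrinking tube around the diagonal), carrying out the smooth computation there, and letting $\varepsilon\to0$. Since $0<Q-\alpha<Q$ and $Q-\alpha-1<Q$, the kernels and the first-order quantities that appear are locally integrable, so dominated convergence legitimizes each passage to the limit; the boundary integration by parts needs no regularization because $f^p\in C^1(\overline\Omega)$.
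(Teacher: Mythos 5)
Your proposal is correct and follows essentially the same route as the paper: both evaluate $\int_\Omega f^{p-1}Ef\,d\xi$ once by the divergence theorem (using $\operatorname{div}E=Q$) and once by letting $E_\xi$ fall on the kernels and symmetrizing in $\xi\leftrightarrow\eta$. The only real difference is that you obtain the key identity $(E_\xi+E_\eta)|\eta^{-1}\xi|^{-\beta}=-\beta|\eta^{-1}\xi|^{-\beta}$ abstractly from the fact that $\delta_r$ is a group automorphism, whereas the paper computes $\partial_r|\eta^{-1}\delta_r\xi|^4|_{r=1}$ explicitly and checks the cancellation by hand; your version is cleaner and your regularization remark addresses a point the paper passes over silently. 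Two caveats: first, the exponent $Q-\alpha$ in the $\lambda$-term of the printed identity \eqref{Pohozaev identity} is a typo — the paper's own derivation (and its later use in \eqref{SB-5}) gives $Q-\alpha-1$, exactly as you obtain, so your answer is the correct one. Second, your closing claim that "the first-order quantities that appear are locally integrable" is too quick: the unsymmetrized kernel $E_\xi|\eta^{-1}\xi|^{\alpha-Q}$ behaves like $|\eta^{-1}\xi|^{\alpha-Q-2}$ near the diagonal (the $t$-component contributes only two orders of smallness), which is not absolutely integrable when $\alpha\le 2$; this is harmless in your scheme only because you perform the symmetric/antisymmetric splitting at fixed $\varepsilon$, where the antisymmetric part vanishes identically, and pass to the limit only in the symmetrized (hence integrable) expression — that ordering should be stated explicitly.
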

\begin{proof}
Denote by $\xi=(z,t)=(x,y,t),\ \eta=(w,s)=(u,v,s)$ and then
    $$\eta^{-1}\xi =(z-w,t-s-2\rm{Im}(w\cdot\overline{z}) =(z-w,t-s+2(yu-xv)).$$
Noting that $\Omega$ is a $\delta$-starshaped domain with respect to the origin and $Ef =\left.\frac{\partial}{\partial r} f(\delta_r(\xi))\right|_{r=1}$, we have that, by \eqref{gen_equ},
\begin{align}\label{formula 2.4}
    Ef(\xi)=\frac{\partial}{\partial r}\left(\int_\Omega \frac{f^{p-1}(\eta)}{|\eta^{-1}\delta_r(\xi)|^{Q-\alpha}}d\eta + \lambda \int_\Omega \frac{f^{p-1}(\eta)}{|\eta^{-1}\delta_r(\xi)|^{Q-\alpha-1}}d\eta\right)_{r=1}.
\end{align}
A direct calculation leads to
\begin{align}\label{formula 2.1}
\left.\frac{\partial}{\partial r} |\eta^{-1}\delta_r(\xi)|^4\right|_{r=1} =&4|z-w|^2[x(x-u)+y(y-v)]\nonumber\\ &+4(t-s+2(yu-xv))(t+(yu-xv)).
\end{align}
By \eqref{formula 2.4}, we have
\begin{align*}
    &\int_\Omega f^{p-1}(\xi) Ef(\xi) d\xi\\
    =&(\alpha-Q)\int_{\Omega}\int_{\Omega} \frac{f^{p-1}(\xi)f^{p-1}(\eta)} {|\eta^{-1}\xi|^{Q-\alpha+4}}\cdot \left(\frac 14 \frac{\partial}{\partial r} |\eta^{-1}\delta_r(\xi)|^4 \right)_{r=1} d\eta d\xi\\
    &+(\alpha-Q+1)\lambda \int_{\Omega}\int_{\Omega} \frac{f^{p-1}(\xi)f^{p-1}(\eta)} {|\eta^{-1}\xi|^{Q-\alpha+3}}\cdot\left(\frac 14 \frac{\partial}{\partial r} |\eta^{-1}\delta_r(\xi)|^4\right)_{r=1} d\eta d\xi\\
    :=&I+II.
\end{align*}
Estimating $I$ by \eqref{formula 2.1}, we have
\begin{align*}
    2\times I=&(\alpha-Q)\int_{\Omega}\int_{\Omega} \frac{f^{p-1}(\xi)f^{p-1}(\eta)} {|\eta^{-1}\xi|^{Q-\alpha+4}} \left\{|z-w|^2[x(x-u)+y(y-v)]\right.\\
    &\hspace{2.4cm}\left.+(t-s+2(yu-xv))(t+(yu-xv))\right\} d\eta d\xi\\
    &+(\alpha-Q)\int_{\Omega}\int_{\Omega} \frac{f^{p-1}(\xi)f^{p-1}(\eta)} {|\xi^{-1}\eta|^{Q-\alpha+4}} \left\{|w-z|^2[u(u-x)+v(v-y)]\right.\\
    &\hspace{2.4cm}\left.+(s-t+2(vx-uy))(s+(vx-uy))\right\} d\xi d\eta \\
    =&(\alpha-Q) \int_{\Omega}\int_{\Omega} \frac{f^{p-1}(\xi)f^{p-1}(\eta)} {|\eta^{-1}\xi|^{Q-\alpha}} d\eta d\xi.
\end{align*}
That is
\begin{equation*}
    I=\frac{\alpha-Q}2 \int_{\Omega}\int_{\Omega} \frac{f^{p-1}(\xi)f^{p-1}(\eta)} {|\eta^{-1}\xi|^{Q-\alpha}} d\eta d\xi.
\end{equation*}
Similarly,
\begin{equation*}
    II=\frac{\alpha-Q+1}2\lambda \int_{\Omega}\int_{\Omega} \frac{f^{p-1}(\xi)f^{p-1}(\eta)} {|\eta^{-1}\xi|^{Q-\alpha-1}} d\eta d\xi.
\end{equation*}
Combining the above into \eqref{formula 2.4}, we arrive at
\begin{align}\label{formula 2.2}
    &\int_\Omega f^{p-1}(\xi) Ef(\xi) d\xi\nonumber\\
    =&\frac{\alpha-Q}2 \int_{\Omega}\int_{\Omega} \frac{f^{p-1}(\xi)f^{p-1}(\eta)} {|\eta^{-1}\xi|^{Q-\alpha}} d\eta d\xi \nonumber\\
    &+\frac{\alpha-Q+1}2\lambda \int_{\Omega}\int_{\Omega} \frac{f^{p-1}(\xi)f^{p-1}(\eta)} {|\eta^{-1}\xi|^{Q-\alpha-1}} d\eta d\xi\nonumber\\
    =&\frac{\alpha-Q}2 \int_{\Omega} f^{p-1}(\xi) \left[f(\xi)-\lambda\int_\Omega \frac{f^{p-1}(\eta)} {|\eta^{-1}\xi|^{Q-\alpha-1}} d\eta\right] d\xi \nonumber\\
    &+\frac{\alpha-Q+1}2\lambda \int_{\Omega}\int_{\Omega} \frac{f^{p-1}(\xi)f^{p-1}(\eta)} {|\eta^{-1}\xi|^{Q-\alpha-1}} d\eta d\xi\nonumber\\
    =&\frac{\alpha-Q}2\int_\Omega f^p(\xi) d\xi+\frac\lambda 2\int_{\Omega}\int_{\Omega} \frac{f^{p-1}(\xi)f^{p-1}(\eta)} {|\eta^{-1}\xi|^{Q-\alpha-1}} d\eta d\xi.
\end{align}

On the other hand, by integration by part, we have
\begin{align}\label{formula 2.3}
    &\int_\Omega f^{p-1}(\xi) Ef(\xi)d\xi=\int_\Omega E(\frac {u^p(\xi)}p) d\xi=\frac 1p\int_{\partial\Omega}u^p(\xi) E\cdot\nu dS-\frac Qp\int_\Omega u^p(\xi) d\xi.
\end{align}
Hence, We deduce \eqref{Pohozaev identity} by combining \eqref{formula 2.2} and \eqref{formula 2.3}.
\end{proof}

\medskip

\noindent{\bf Proof of part (3) in Theorem \ref{main} (nonexistence part)}. If $f(\xi)$ is a non-negative $C^1(\overline{\Omega}) $ solution to \eqref{HB} for $\lambda \le 0$, then by Lemma \ref{poh}, we know that  $g(\xi)=f^{q-1}(\xi)$ satisfies
\begin{eqnarray}\label{SB-5}
-\frac{\lambda}2\int_{\Omega}\int_{\Omega} \frac{g^{p-1}(\xi)g^{p-1}(\eta)} {|\eta^{-1}\xi|^{Q-\alpha-1}}dydx +\frac 1p \int_{\partial \Omega}(E\cdot\nu) g^{p-1}(\xi)d\sigma\le 0.
\end{eqnarray}
Since $\Omega$ is $\delta$-starshaped domain about the origin, we have $E\cdot \nu>0$  on $\partial\Omega$.  If $\lambda<0$, then $g(\xi)\equiv 0 $ on $\Omega$. If $\lambda=0$, It follows immediately from \eqref{SB-5} that $g\equiv0$ on $\partial\Omega$.  Therefore, from \eqref{HB} we conclude that $g\equiv0$ on $\Omega$.  Part (3) in Theorem 1.1 is proved.\hfill $\Box$

\section{Existence result for subcritical case}\label{Sec supercritial exist}

To obtain the existence to equation \eqref{HB} with subcritical powers,  we need the following compactness lemma.

\begin{lemma}[Compactness]\label{compact lem}
For any compact domain $\Omega$, operator $I_\alpha: L^{\frac{2Q}{Q+\alpha}}(\Omega) \rightarrow L^r(\Omega)$, $r<\frac{2Q}{Q-\alpha}$, is compact. Namely, for any bounded sequence  $\{f_j\}_{j=1}^{+\infty} \subset L^{\frac{2Q}{Q+\alpha}}(\Omega)$, there exist a function $f\in L^{\frac{2Q}{Q+\alpha}}(\Omega)$ and a subsequence of $\{I_\alpha f_j(\xi)\}_{j=1}^{+\infty}$ which converges to $I_\alpha f$ in $L^r(\Omega)$.
\end{lemma}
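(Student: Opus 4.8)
The plan is to realize $I_{\alpha,\Omega}$ as a group convolution and to split its Riesz-type kernel into a singular piece of small operator norm and a bounded piece that induces a compact operator; the conclusion then follows because the compact operators form a norm-closed subspace of the bounded operators between Banach spaces. Writing $K(\zeta)=|\zeta|^{\alpha-Q}$ and using the trivial extension, we have $I_{\alpha,\Omega}f=(\tilde f * K)\big|_\Omega$, since $\tilde f * K(\xi)=\int_{\mathbb{H}^n}\tilde f(\eta)K(\eta^{-1}\xi)\,d\eta$ by the definition of convolution. By the Remark after Proposition \ref{pro 8.7 of FS}, $I_\alpha:L^{q_\alpha}(\mathbb{H}^n)\to L^{p_\alpha}(\mathbb{H}^n)$ is bounded, so on the bounded set $\Omega$ the family $\{I_{\alpha,\Omega}f_j\}$ is bounded in every $L^r(\Omega)$ with $r\le p_\alpha$. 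Since $L^{r'}(\Omega)\hookrightarrow L^{r}(\Omega)$ continuously for $r<r'$ on a bounded domain, it suffices to prove compactness into $L^{r'}(\Omega)$ for a single exponent $r'$ with $q_\alpha<r'<p_\alpha$ (this interval is nonempty because $Q-\alpha<Q+\alpha$); compactness into every $L^r$, $1\le r<p_\alpha$, then follows by composition with the embedding.

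Fix such an $r'$ and let $m$ be the Young exponent defined by $1/m=1+1/r'-1/q_\alpha$. A direct computation shows $1<m<Q/(Q-\alpha)$ precisely because $q_\alpha<r'<p_\alpha$. Choose a smooth cutoff $\chi_\delta$ with $\chi_\delta=0$ on $\{|\zeta|\le \delta/2\}$ and $\chi_\delta=1$ on $\{|\zeta|\ge \delta\}$, and split $K=(1-\chi_\delta)K+\chi_\delta K$, which induces $I_{\alpha,\Omega}=R_\delta+T_\delta$. For the singular remainder, Young's inequality on the unimodular group $\mathbb{H}^n$ gives $\|R_\delta f\|_{L^{r'}}\le \|(1-\chi_\delta)K\|_{L^m(\mathbb{H}^n)}\,\|f\|_{L^{q_\alpha}}$; since $(1-\chi_\delta)K$ is supported in $\{|\zeta|<\delta\}$, polar integration in the homogeneous dimension $Q$ yields $\|(1-\chi_\delta)K\|_{L^m}^m\le C\int_0^{\delta}\rho^{\,Q-1-(Q-\alpha)m}\,d\rho=C'\delta^{\,Q-(Q-\alpha)m}\to 0$ as $\delta\to 0$, using $m<Q/(Q-\alpha)$. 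Hence $\|R_\delta\|_{L^{q_\alpha}(\Omega)\to L^{r'}(\Omega)}\to 0$.

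For the bounded part $T_\delta f=\tilde f*(\chi_\delta K)$, the kernel $\chi_\delta K$ is continuous and bounded on $\mathbb{H}^n$, and as $\eta,\xi$ range over the bounded set $\Omega$ the argument $\eta^{-1}\xi$ stays in a fixed bounded set on which $\chi_\delta K$ is uniformly continuous. By Hölder, writing $q_\alpha'$ for the conjugate exponent, $|T_\delta f(\xi)-T_\delta f(\xi')|\le \|f\|_{L^{q_\alpha}}\,\big\|(\chi_\delta K)(\eta^{-1}\xi)-(\chi_\delta K)(\eta^{-1}\xi')\big\|_{L^{q_\alpha'}(\Omega,\,d\eta)}$, which tends to $0$ as $\xi'\to\xi$ uniformly over $\|f\|_{L^{q_\alpha}}\le 1$ by uniform continuity and dominated convergence; together with the uniform sup-bound $\|T_\delta f\|_\infty\le C_\delta\|f\|_{L^{q_\alpha}}$, the Arzel\`a--Ascoli theorem shows that $T_\delta$ maps bounded subsets of $L^{q_\alpha}(\Omega)$ into precompact subsets of $C(\overline\Omega)$, hence of $L^{r'}(\Omega)$; thus $T_\delta$ is compact.

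Finally, $I_{\alpha,\Omega}=T_\delta+R_\delta$ with $T_\delta$ compact and $\|R_\delta\|\to 0$, so $I_{\alpha,\Omega}$ is the operator-norm limit of the compact operators $T_\delta$ and is therefore itself compact from $L^{q_\alpha}(\Omega)$ into $L^{r'}(\Omega)$, whence into $L^r(\Omega)$ for all $r<p_\alpha$. I expect the main obstacle to be the exponent bookkeeping that makes the two constraints coexist: Young's inequality forces $m>1$, while integrability of the truncated Riesz kernel forces $m<Q/(Q-\alpha)$, and it is exactly here that strict subcriticality $r<p_\alpha$ is used; the borderline $r=p_\alpha$ lands on $m=Q/(Q-\alpha)$, where the remainder norm no longer tends to $0$ and this scheme breaks down. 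A secondary technical point is the uniform-in-$f$ equicontinuity in the third step, which is why the smooth truncation $\chi_\delta$ (removing the diagonal singularity) is preferable to a sharp cutoff.
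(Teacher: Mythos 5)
Your proof is correct and follows essentially the same route as the paper: both decompose the kernel $|\zeta|^{\alpha-Q}$ into a near-diagonal singular piece, controlled via Young's inequality with an exponent $m<Q/(Q-\alpha)$ so that its contribution vanishes as the truncation scale shrinks, and an off-diagonal bounded piece. The only real differences are cosmetic --- the paper treats the truncated part by weak convergence plus dominated convergence instead of your Arzel\`a--Ascoli argument, and packages the conclusion as a ``first $\rho$ small, then $j$ large'' selection rather than as an operator-norm limit of compact operators.
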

\begin{proof}
Since the sequence $\{f_j\}$ is bounded in $L^{\frac{2Q}{Q+\alpha}}(\Omega)$, then there exist a subsequence (still denoted by $\{f_j\}$) and a function $f\in L^{\frac{2Q}{Q+\alpha}}(\Omega)$ such that
    $$f_j\rightharpoonup f\quad \text{weakly in} \quad L^{\frac{2Q}{Q+\alpha}}(\Omega).$$

Decompose $|\xi|^{\alpha-Q} =|\xi|^{\alpha-Q}\chi_{\{|\xi|>\rho\}} +|\xi|^{\alpha-Q}\chi_{\{|\xi|<\rho\}}$, where $\rho>0$ will be chosen later. Then,
\begin{align*}
    I_\alpha f_j(\xi)&=I_\alpha^1 f_j(\xi)+I_\alpha^2 f_j(\xi)\\
    &:=f_j*|\xi|^{\alpha-Q}\chi_{\{|\xi|>\rho\}} +f_j*|\xi|^{\alpha-Q}\chi_{\{|\xi|<\rho\}}.
\end{align*}

Noting $|\xi|^{\alpha-Q}\chi_{\{|\xi|>\rho\}}\in L^{\frac{2Q}{Q-\alpha}}$, then $I_\alpha^1 f_j(\xi)$ converges pointwise to $I_\alpha^1 f(\xi)$ by the weak convergence. On the other hand, since
    $$|I_\alpha^1 f_j(\xi)|\leq \|f_j\|_{L^{\frac{2Q}{Q+\alpha}}} \bigl\||\xi|^{\alpha-Q}\chi_{\{|\xi|>\rho\}}\bigr\|_{L^{\frac{2Q}{Q-\alpha}}}\leq C(\rho),$$
where $C(\rho)$ is independent of $f_j$, then the dominated convergence deduce that
\begin{equation}\label{formula 3.2}
    I_\alpha^1 f_j\rightarrow I_\alpha^1 f \quad \text{strongly in} \quad L^r(\Omega).
\end{equation}

Next, we analyze the convergence of $\{I_\alpha^2 f_j\}$ by the Young inequality. Take $s=\left(\frac 1r+\frac{Q-\alpha}{2Q}\right)^{-1}$. Then, $s<\frac Q{Q-\alpha}$ and $\bigl\||\xi|^{\alpha-Q}\chi_{\{|\xi|<\rho\}}\bigr\|_{L^s}<C\rho^\beta$ with $\beta=Q(\frac 1s-\frac{Q-\alpha}Q)$. By the Young inequality, we have
\begin{equation}\label{formula 3.3}
    \|I_\alpha^2(f_j-f)\|_{L^r}\le C\|f_j-f\|_{L^{\frac{2Q}{Q+\alpha}}} \bigl\||\xi|^{\alpha-Q}\chi_{\{|\xi|<\rho\}}\bigr\|_{L^s}\le C\rho^\beta.
\end{equation}
By now, through choosing first $\rho$ small and then $j$ large, we deduce by \eqref{formula 3.2} and \eqref{formula 3.3} that
  \[
  I_\alpha f_j\rightarrow I_\alpha f \quad\text{strongly in}\quad L^r(\Omega).
  \]
  The Lemma is proved.
\end{proof}

%

Based on the Lemma \ref{compact lem}, we can obtain the existence result (part (1) in Theorem \ref{main}). For simplicity, we only present the proof for $\lambda=0$.
\begin{lemma}\label{Bound- attained}
For $q>{q_\alpha}$, supremum
\begin{equation*}
D_{\alpha,q}(\Omega):=\sup_{f\in L^q(\Omega)\setminus\{0\}} \frac{\int_{\Omega} \int_{\Omega}f(\xi)|\eta^{-1}\xi|^{-(Q-\alpha)} f(\eta) d\eta d\xi}{\|f\|^2_{L^q(\Omega)}}
\end{equation*}
 is attained by some nonnegative  function in $L^q(\Omega).$
\end{lemma}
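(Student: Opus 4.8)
The plan is to run the direct method of the calculus of variations, drawing all the compactness from Lemma \ref{compact lem} and from the subcriticality $q>q_\alpha$. First I would record that $D_{\alpha,q}(\Omega)$ is finite and positive: the sharp HLS inequality (Theorem \ref{HLS Hn}) bounds the numerator by $D_{n,\alpha}\|f\|_{L^{q_\alpha}(\Omega)}^2$, and since $\Omega$ is bounded and $q>q_\alpha$, H\"older gives $\|f\|_{L^{q_\alpha}(\Omega)}\le C\|f\|_{L^q(\Omega)}$, whence $D_{\alpha,q}(\Omega)\le C^2 D_{n,\alpha}<\infty$; positivity is immediate from testing with any nonnegative bump, as the kernel $|\eta^{-1}\xi|^{-(Q-\alpha)}$ is strictly positive. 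Because the kernel is positive, replacing $f$ by $|f|$ can only increase the numerator while leaving $\|f\|_{L^q(\Omega)}$ unchanged, so I may work throughout with nonnegative competitors.

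Next I would take a maximizing sequence $\{f_j\}\subset L^q(\Omega)$ with $f_j\ge 0$ and $\|f_j\|_{L^q(\Omega)}=1$, so that the numerator tends to $D_{\alpha,q}(\Omega)$. By reflexivity of $L^q(\Omega)$, extract $f_j\rightharpoonup f$ weakly in $L^q(\Omega)$; the limit is nonnegative and satisfies $\|f\|_{L^q(\Omega)}\le 1$ by weak lower semicontinuity. Since $\Omega$ is bounded, $\{f_j\}$ is also bounded in $L^{q_\alpha}(\Omega)$ and $f_j\rightharpoonup f$ weakly there as well (the two weak limits agree after testing against $C_c^\infty$). I then apply Lemma \ref{compact lem} with the exponent $r=q'$, which is admissible precisely because $q>q_\alpha$ forces $q'<q_\alpha'=p_\alpha$; this yields, along a further subsequence, strong convergence $I_{\alpha,\Omega}f_j\to I_{\alpha,\Omega}f$ in $L^{q'}(\Omega)$.

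The crux is passing to the limit in the numerator $\int_\Omega (I_{\alpha,\Omega}f_j)\,f_j\,d\xi$. I would split
\[
\int_\Omega (I_{\alpha,\Omega}f_j)f_j-\int_\Omega (I_{\alpha,\Omega}f)f
=\int_\Omega (I_{\alpha,\Omega}f_j-I_{\alpha,\Omega}f)f_j+\int_\Omega (I_{\alpha,\Omega}f)(f_j-f).
\]
The first term is bounded by $\|I_{\alpha,\Omega}f_j-I_{\alpha,\Omega}f\|_{L^{q'}(\Omega)}\|f_j\|_{L^q(\Omega)}\to 0$ by the strong convergence just obtained, and the second tends to $0$ because $I_{\alpha,\Omega}f\in L^{p_\alpha}(\Omega)\subset L^{q'}(\Omega)$ is a fixed function while $f_j-f\rightharpoonup 0$ weakly in $L^q(\Omega)$. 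Hence the numerator converges to $\int_\Omega (I_{\alpha,\Omega}f)f$, which therefore equals $D_{\alpha,q}(\Omega)>0$; in particular $f\not\equiv 0$. Combining this with $\|f\|_{L^q(\Omega)}\le 1$ gives
\[
\frac{\int_\Omega (I_{\alpha,\Omega}f)\,f}{\|f\|^2_{L^q(\Omega)}}\ge D_{\alpha,q}(\Omega),
\]
so by the definition of the supremum equality must hold, forcing $\|f\|_{L^q(\Omega)}=1$ and exhibiting $f$ as the desired nonnegative maximizer.

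I expect the main obstacle to be exactly this continuity of the quadratic form under weak convergence, and everything hinges on the strict subcriticality $q>q_\alpha$: it is what makes $q'<p_\alpha$ and lets Lemma \ref{compact lem} deliver strong convergence of $I_{\alpha,\Omega}f_j$ in $L^{q'}(\Omega)$. At the endpoint $q=q_\alpha$ the operator is no longer compact and this argument fails, which is precisely why the critical case requires the separate approximation-and-blow-up analysis carried out later.
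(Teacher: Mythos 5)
Your proof is correct and follows essentially the same route as the paper: a maximizing sequence normalized in $L^q$, weak convergence in $L^q$, strong convergence of $I_{\alpha,\Omega}f_j$ in $L^{q'}$ via Lemma \ref{compact lem} (admissible since $q>q_\alpha$ gives $q'<p_\alpha$), and weak lower semicontinuity of the norm. You actually supply two details the paper leaves implicit --- the H\"older step ensuring boundedness in $L^{q_\alpha}$ so the compactness lemma applies, and the two-term splitting justifying convergence of the quadratic form --- but the argument is the same.
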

\begin{proof}
First, by HLS inequality \eqref{class-HLS-1} or \eqref{class-HLS}, we know that
    $$D_{\alpha,q}(\Omega)
    \le \sup_{f\in L^q(\Omega)\setminus\{0\}}\frac{\|I_\alpha f\|_{L^{q'}(\Omega)}}{\|f\|_{L^q(\Omega)}} < +\infty.$$
Choosing a nonnegative maximizing sequence $\{f_j\}_{j=1}^\infty\subset L^q(\Omega)$ satisfying $\|f_j\|_L^q(\Omega)=1$ and
    $$\lim_{j\rightarrow+\infty} \int_{\Omega} \int_{\Omega} f_j(\xi)|\eta^{-1}\xi|^{-(Q-\alpha)} f_j(\eta) d\eta d\xi=D_{\alpha,q}(\Omega).$$
Combining the boundedness of $\{f_j\}$ in $L^q(\Omega)$ and the compactness of the operator $I_{\alpha,\Omega}$, we deduce by Lemma \ref{compact lem} that there exists a subsequence (still denoted as $\{f_j\}$) and $f_*\in L^q(\Omega)$ such that
\begin{gather*}
    f_j\rightharpoonup f_*\quad\text{weakly\, in}\quad L^q(\Omega),\\
    I_{\alpha, \Omega} f_j\rightarrow I_{\alpha,\Omega} f_*\quad\text{strongly\, in}\quad L^{q'}(\Omega).
\end{gather*}
Thus, $\|f_*\|_{L^q(\Omega)}\le\liminf_{j\to\infty}\|f_j\|_{L^q(\Omega)}$ and
    $$\lim_{j\rightarrow+\infty}\langle I_{\alpha,\Omega}f_j,f_j\rangle=\langle I_{\alpha,\Omega} f,f\rangle.$$
Then,
    $$D_{\alpha,q}(\Omega)=\lim_{j \to \infty}\frac{\langle I_{\alpha, \Omega} f_j,f_j\rangle}{\|f_j\|^2_{L^q(\Omega)}}\le\frac{\langle I_{\alpha, \Omega} f_*,f_*\rangle}{\|f_*\|^2_{L^q(\Omega)}},$$
namely, $f_*$ is a maximizer.
\end{proof}
\smallskip

It is easy to see that  the  maximizer for energy $D_{\alpha, q}(\Omega)$, up to a constant multiplier,  satisfies following equation:
\begin{equation}\label{EL-equ-2}
f^{q-1}(\xi)=\int_\Omega\frac{f(\eta)}{|\eta^{-1}\xi|^{Q-\alpha}}d\eta, \quad \xi\in \overline{\Omega}.
\end{equation}
Let $g(\xi)=f^{q-1}(\xi)$, and then \eqref{EL-equ-2} is changed into the form
\begin{equation}\label{EL-equ-22}
g(\xi)=\int_\Omega\frac{g^{q'-1}(\eta)}{|\eta^{-1}\xi|^{Q-\alpha}}d\eta, \quad \xi\in \overline{\Omega}
\end{equation}
for $q'<\frac{2Q}{Q-\alpha}=p_\alpha.$ To complete the proof of Part (1) in Theorem \ref{main} we need to show that $g \in \Gamma^\alpha(\overline \Omega)$.

\begin{lemma}[Regularity]\label{regularity-lemma}
Suppose that $g\in L^{q'}(\Omega)$ is a  positive solution to \eqref{EL-equ-22}. If $ q'<{p_\alpha},$ then $g \in \Gamma^\alpha(\overline \Omega)\subset C^{\alpha/2}(\overline\Omega).$
\end{lemma}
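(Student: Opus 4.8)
The plan is to prove the lemma in two stages: first bootstrap the solution $g$ from $L^{q'}(\Omega)$ to $L^\infty(\Omega)$ using the mapping properties of $I_\alpha$, and then upgrade the $L^\infty$ bound to membership in the Lipschitz space $\Gamma^\alpha$ by exploiting the smoothing of the convolution operator $I_\alpha g=g*|\xi|^{\alpha-Q}$ together with the homogeneity estimates of Proposition \ref{pro 1.15 of Folland}. Throughout I write $g=I_{\alpha,\Omega}(g^{q'-1})$ as in \eqref{EL-equ-22} and keep the kernel on the second factor of the convolution.

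For the first stage I run an integrability iteration. If $g\in L^{s}(\Omega)$, then $g^{q'-1}\in L^{s/(q'-1)}(\Omega)$, and the mapping property of $I_\alpha$ (Proposition \ref{pro 8.7 of FS} and the Remark following it) gives $g\in L^{s'}(\Omega)$ with $\tfrac1{s'}=\tfrac{q'-1}{s}-\tfrac\alpha Q$, valid while the right side is positive and $s/(q'-1)>1$. Setting $x_n=1/s_n$ this is the affine recursion $x_{n+1}=(q'-1)x_n-\tfrac\alpha Q$ started at $x_0=1/q'$; since $q<2$ we have $q'>2$, so its fixed point is $x^\ast=\tfrac{\alpha/Q}{q'-2}>0$, and an elementary computation shows that $x_0<x^\ast$ is \emph{equivalent} to the subcriticality $q'<p_\alpha$. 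This is exactly where the hypothesis enters. Because the recursion has slope $q'-1>1$ and $x_0<x^\ast$, one checks $x_n<x^\ast$ for all $n$, so $x_{n+1}-x_n=(q'-2)x_n-\tfrac\alpha Q<0$, with the decrement bounded below in size; hence after finitely many steps one reaches $g^{q'-1}\in L^{p}(\Omega)$ with $p>Q/\alpha$. On the bounded set $\Omega$ the kernel $|\xi|^{\alpha-Q}$ then lies in $L^{p'}(\Omega)$, and Hölder's inequality yields $g=I_{\alpha,\Omega}(g^{q'-1})\in L^\infty(\Omega)$.

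For the second stage set $h=g^{q'-1}\in L^\infty(\Omega)$ and let $\tilde h$ be its trivial extension, so $g=\tilde h*|\xi|^{\alpha-Q}$ on $\mathbb{H}^n$. The heart of the matter is a near/far estimate: for a kernel $K$ homogeneous of degree $\gamma-Q$ and smooth away from the origin ($\gamma>0$), I would bound the difference $g(\xi\zeta)-g(\xi)=\int\tilde h(\eta)\,[K(\mu\zeta)-K(\mu)]\,d\eta$ with $\mu=\eta^{-1}\xi$, by splitting into $|\mu|\ge 2|\zeta|$ and $|\mu|<2|\zeta|$. On the far region the first estimate of Proposition \ref{pro 1.15 of Folland} gives $|K(\mu\zeta)-K(\mu)|\le C|\zeta||\mu|^{\gamma-Q-1}$, and integrating in the homogeneous polar coordinates of $\mathbb{H}^n$ (measure $\sim\rho^{Q-1}d\rho$, using the bi-invariance of Haar measure under $\eta\mapsto\eta^{-1}\xi$) produces $C|\zeta|\int_{2|\zeta|}^{R}\rho^{\gamma-2}\,d\rho$, which is $\le C|\zeta|^{\gamma}$ when $\gamma<1$ and $\le C|\zeta|$ when $\gamma\ge1$; on the near region $K(\mu)$ and $K(\mu\zeta)$ are integrated directly, each contributing $\int_0^{C|\zeta|}\rho^{\gamma-1}\,d\rho\sim|\zeta|^{\gamma}$. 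The borderline $\gamma=1$ is handled by applying the same split to the symmetric second difference with the second estimate of Proposition \ref{pro 1.15 of Folland}. All constants depend only on $\|\tilde h\|_\infty$, $\mathrm{diam}\,\Omega$, $\alpha$ and $Q$, so the bounds are uniform up to $\partial\Omega$; thus the base estimate yields $\tilde h*K\in\Gamma_{\min(\gamma,1)}$.

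Finally I assemble the general case $\alpha=k+\beta'$ ($k\in\mathbb{N}$, $0<\beta'\le1$) by checking the defining conditions of $\Gamma_\alpha$ directly, with no separate induction. Since $|\xi|^{\alpha-Q}$ is smooth away from $0$, Proposition \ref{pro 8.1 of FS} shows that for any $D=L_{a_1}\cdots L_{a_j}\in\mathscr{B}_k$ the kernel $D|\xi|^{\alpha-Q}$ is homogeneous of degree $\alpha-j-Q$, and $\alpha-j\ge\alpha-k=\beta'>0$ guarantees this degree stays strictly above $-Q$, so the kernel is locally integrable and no distributional (principal-value) correction arises; by left-invariance $Dg=\tilde h*(D|\xi|^{\alpha-Q})$ after differentiating under the integral sign. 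Running the base estimate on each such kernel gives $Dg\in\Gamma_{\min(\alpha-j,\,1)}\subset\Gamma_{\beta'}$ (and likewise $g\in\Gamma_{\beta'}$), which is precisely the requirement that $g\in\Gamma^\alpha(\overline\Omega)$; the inclusion $\Gamma_\alpha\subset C^{\alpha/2}$ of Proposition \ref{thm 20.1 of FS} then completes the proof. The main obstacle I anticipate is the case $\alpha>1$: here one must pass the horizontal derivatives onto the kernel and carry out the homogeneity bookkeeping of Proposition \ref{pro 8.1 of FS}, the key point being that every kernel produced has homogeneity degree strictly above $-Q$, which is what legitimizes both the differentiation under the integral and the absence of singular correction terms.
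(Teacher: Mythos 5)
Your proposal follows essentially the same two-step route as the paper: the same $L^\infty$ bootstrap via the recursion $1/t_{i+1}=(q'-1)/t_i-\alpha/Q$ (which you repackage more transparently as an affine recursion whose fixed-point analysis shows exactly where $q'<p_\alpha$ enters), and the same near/far splitting based on Proposition \ref{pro 1.15 of Folland} together with passing horizontal derivatives onto the kernel for $\alpha>1$. The only detail you elide is the borderline step where the iteration lands exactly on $g^{q'-1}\in L^{Q/\alpha}$; the paper disposes of it by a small downward H\"older perturbation on the bounded domain, which your argument would also need to invoke.
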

\begin{proof}
{\bf Step 1.}   we show $g\in L^\infty(\overline\Omega)\cup C(\overline\Omega)$.

For proving $g \in L^\infty(\overline \Omega)$, it is necessary to prove that there exists some constant $s^*>0$ such that $g\in L^{s^*}(\Omega)$ and $\frac {s^*}{q'-1}>\frac Q\alpha$. In fact, if there exists such $s^*$, then
    $$g(\xi)\leq \|g\|_{L^{s^*}(\Omega)}^{q'-1}\left(\int_\Omega |\eta^{-1}\xi|^{(\alpha-Q)(s^*/(q'-1))'} d\eta\right)^{\frac 1{(s^*/(q'-1))'}}\leq C \|g\|_{L^{s^*}(\Omega)}^{q'-1},$$
where $(s^*/(q'-1))'$ is the conjugate number of $(s^*/(q'-1))$. Hence, $g\in L^\infty(\Omega)$, which leads to $g\in C(\overline\Omega)$ by the dominant convergence theorem.

i) If $q'<\frac Q{Q-\alpha}$, we can take $s^*=q'$.

ii) If $q'=\frac Q{Q-\alpha}$, then $g\in L^{q_1}(\Omega)$ with $q_1=(1-\frac 1k)\frac Q{Q-\alpha}$ and $k=\left[\frac Q{Q-\alpha}\right]+1$. By \eqref{EL-equ-22} and HLS inequality \eqref{class-HLS-1}, we know that $g\in L^{s^*}(\Omega)$ with $\frac 1{s^*}=\frac {q'-1}{q_1}-\frac\alpha Q=\frac \alpha{(k-1)Q}$ and $\frac {s^*}{q'-1}>\frac Q\alpha$.

iii) For the case $\frac{Q}{Q-\alpha}<q'<\frac{2Q}{Q-\alpha}$, we will find the constant $s^*$ by the following iteration process. By \eqref{EL-equ-22} and HLS inequality \eqref{class-HLS-1}, we have
\begin{equation}\label{it18}
||g||_{L^s(\Omega)}=||I_{\alpha, \Omega} (g^{q'-1})||_{L^s(\Omega)} \le C ||g^{q'-1}||_{L^t(\Omega)}
\end{equation}
for $1/s=1/t-\alpha/Q$. We use the above inequality to do iteration. First, choose $t=q'/(q'-1):=t_1/(q'-1)$, and let $t_2=s$. Since $q'<2Q/(Q-\alpha)$, it is easy to check that $t_2>2Q/(Q-\alpha)$.

Iterate the above: let $t=t_i/(q'-1)$, then $t_{i+1}=s$ for $i=1,2,\cdots$. Note that $q'-2<2\alpha/(Q-\alpha),$ and $1/t_i<(Q-\alpha)/(2Q)$. Thus, one can check  that $t_{i+1}>t_i$ when  $t_{i+1}$ is positive.  So, after iterating certain times, say, $k_0$ times, we shall have: $ \frac{q'-1}{t_{k_0}}>\frac\alpha Q$ and $\frac{q'-1}{t_{k_0+1}}\le\frac\alpha Q$.

If $\frac{q'-1}{t_{k_0+1}}<\frac\alpha Q$, we can take $s^*=t_{k_0+1}$.

If $\frac{q'-1}{t_{k_0+1}}=\frac\alpha Q$, then $u\in L^{t_{k_0+1}}(\Omega)=L^{(q'-1)\frac Q\alpha}(\Omega)$. By the boundedness of $\Omega$ and the H\"{o}lder inequality, we know that $u\in L^{q_2}(\Omega)$ with $q_2=(1-\frac 1k)(q'-1)\frac Q\alpha$ and $k=\left[\frac{2Q}{Q-\alpha}\right]+1$. Using \eqref{EL-equ-22} and HLS inequality \eqref{class-HLS-1}, we get $u\in L^{s^*}(\Omega)$ with $\frac 1{s^*}=\frac{q'-1}{q_2}-\frac\alpha Q$ and $\frac{s^*}{q'-1}>\frac Q\alpha$.
\medskip

\noindent{\bf Step 2.}  We show $g\in\Gamma^\alpha(\overline\Omega)$. we divide it into four cases.

\textbf{Case i)} $0<\alpha<1$.

Since $g\in L^\infty(\Omega)$,  we have
\begin{equation}\begin{split}\label{formula 4.12-0}
    |g(\xi\gamma)-g(\xi)|=&\left| \int_\Omega (g(\xi\eta^{-1}))^{q'-1} (|\eta\gamma|^{\alpha-Q} -|\eta|^{\alpha-Q}) d\eta\right|\\
    \leq &\|g\|_{L^\infty(\Omega)}^{q'-1} \int_\Omega \left| |\eta\gamma|^{\alpha-Q} -|\eta|^{\alpha-Q}\right| d\eta.
\end{split}\end{equation}
By the Proposition \ref{pro 1.15 of Folland}, we can estimate
\begin{align}\label{formula 4.12}
    &\int_{\Omega\cap\{|\eta|\geq 2|\gamma|\}} \left| |\eta\gamma|^{\alpha-Q} -|\eta|^{\alpha-Q}\right| d\eta\nonumber\\ \leq & C\int_{\Omega\cap\{|\eta|\geq 2|\gamma|\}}|\gamma||\eta|^{\alpha-Q-1} d\eta\leq C|\gamma|^\alpha.
\end{align}
On the other hand, there exists some $B\geq 2$ such that $|\eta\gamma|\leq B|\gamma|$ since $|\eta|\leq 2|\gamma|$. By Proposition \ref{Pro triangle ineq},
\begin{align}\label{formula 4.13}
    &\int_{\Omega\cap\{|\eta|\leq 2|\gamma|\}} \left| |\eta\gamma|^{\alpha-Q} -|\eta|^{\alpha-Q}\right| d\eta \nonumber\\
    \leq &\int_{\Omega\cap\{|\eta\gamma|\leq 3C|\gamma|\}} |\eta\gamma|^{\alpha-Q} d\eta +\int_{\Omega\cap\{|\eta|\leq 2|\gamma|\}} |\eta|^{\alpha-Q} d\eta\leq C|\gamma|^\alpha.
\end{align}
Substituting \eqref{formula 4.12} and \eqref{formula 4.13} into \eqref{formula 4.12-0}, we have
    $$|g(\xi\gamma)-g(\xi)|\leq C\|g\|_{L^\infty(\Omega)}^{q'-1}|\gamma|^\alpha,$$
i.e., $g\in \Gamma^\alpha(\Omega)$.

\textbf{Case ii)} $\alpha=1$.

By an argument similar to Case i), we can get
    $$|g(\xi\gamma)+g(\xi\gamma^{-1})-2g(\xi)|\leq C\|g\|_{L^\infty(\Omega)}^{q'-1}|\gamma|^\alpha,$$
i.e., $g\in \Gamma^1(\Omega)$.

\textbf{Case iii)} $\alpha=1+\alpha'$ with $0<\alpha'\leq 1$.

Since $X_j g=g^{q'-1}*X_j(|\xi|^{\alpha-Q})\ (1\leq j\leq n)$ and $X_j(|\xi|^{\alpha-Q})$ is homogeneous of degree $\alpha-Q-1$ by Proposition \ref{pro 8.1 of FS}, we can repeat the argument of Case $0<\alpha\leq 1$ and show that $X_jg\in \Gamma^{\alpha'}(\Omega)$. Similarly, one can show that $Y_j g\in \Gamma^{\alpha'}(\Omega),\ 1\leq j\leq n$.

Following, we will prove that $g\in\Gamma^{\alpha'}(\Omega)$. Since $g\in L^\infty(\Omega)$ and $\Omega$ is bounded, we have $g^{q'-1}\in L^Q(\Omega)$. If $0<\alpha'<1$,
\begin{align}\label{formula 4.14}
    &|g(\xi\gamma)-g(\xi)|\nonumber\\
    \leq &\|g^{q'-1}\|_{L^Q(\Omega)} \left(\int_{\Omega\cap\{|\eta|\geq 2|\gamma|\}}\left| |\eta\gamma|^{\alpha-Q}-|\eta|^{\alpha-Q}\right|^{Q'} d\eta\right)^{1/Q'}\nonumber\\
    &+\|g^{q'-1}\|_{L^Q(\Omega)} \left(\int_{\Omega\cap\{|\eta|\leq 2|\gamma|\}}\left| |\eta\gamma|^{\alpha-Q}-|\eta|^{\alpha-Q}\right|^{Q'} d\eta\right)^{1/Q'},
\end{align}
where $Q'=\frac Q{Q-1}$. Similar to Case $0<\alpha<1$, we have
\begin{align}\label{formula 4.15}
    &\left(\int_{\Omega\cap\{|\eta|\geq 2|\gamma|\}}\left| |\eta\gamma|^{\alpha-Q}-|\eta|^{\alpha-Q}\right|^{Q'} d\eta\right)^{1/Q'}\nonumber\\
    \leq & C \left(\int_{\{|\eta|\geq 2|\gamma|\}} \left(|\gamma||\eta|^{\alpha-Q-1}\right)^{Q'} d\eta\right)^{1/Q'}\nonumber\\
    \leq & C|\gamma|\cdot|\gamma|^{\alpha-Q-1+\frac Q{Q'}}=C|\gamma|^{\alpha'},
\end{align}
and
\begin{align}\label{formula 4.16}
    &\left(\int_{\Omega\cap\{|\eta|\leq 2|\gamma|\}}\left| |\eta\gamma|^{\alpha-Q}-|\eta|^{\alpha-Q}\right|^{Q'} d\eta\right)^{1/Q'}\nonumber\\
    \leq & C\left(\int_{\{|\eta|\leq B|\gamma|\}}|\eta|^{(\alpha-Q)Q'} d\eta\right)^{1/Q'}\nonumber\\
    \leq & C|\gamma|^{\alpha-Q+\frac Q{Q'}}=C|\gamma|^{\alpha'}.
\end{align}
Combining \eqref{formula 4.14}, \eqref{formula 4.15} and \eqref{formula 4.16} leads to $g\in\Gamma^{\alpha'}(\Omega)$.

If $\alpha'=1$, the proof can be completed similarly.

\textbf{Case iv)} $\alpha=k+\alpha'$ with $0<\alpha'\leq 1$ and $k=2,3,\cdots$.

This case can be discussed with a similar argument with Case iii).
\end{proof}

\section{Existence result for critical case}\label{Sec critical exist}

Now we shall establish  the existence results for \eqref{HB} with $\lambda>0$ and $q=q_\alpha$. To this end, we consider
\begin{equation*}
Q_\lambda(\Omega):=\sup_{f\in L^{q_\alpha}(\Omega)\setminus\{0\}}\frac{\int_{\Omega} \int_{\Omega}f(\xi)(|\eta^{-1}\xi|^{-(Q-\alpha)}+\lambda |\eta^{-1}\xi|^{-(Q-\alpha-1)}) f(\eta) d\eta d\xi}{\|f\|^2_{L^{q_\alpha}(\Omega)}}.
\end{equation*}
 Notice that the corresponding Euler-Lagrange equation for extremal functions, up to a constant multiplier,  is  integral equation \eqref{HB} for $q=q_\alpha$.

First, we show
\begin{lemma}\label{Lm-inequ}
$Q_\lambda(\Omega)>D_{n,\alpha}$ for all $\lambda>0$.
\end{lemma}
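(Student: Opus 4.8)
The plan is to exploit the same family of concentrating test functions $f_\epsilon$ used in Proposition \ref{prop2-1}, and to show that for $\lambda>0$ the lower-order term with kernel $|\eta^{-1}\xi|^{-(Q-\alpha-1)}$ makes a strictly positive contribution of larger order (in $\epsilon$) than the error incurred by truncating $f_\epsilon$ to $\Omega$ in the leading HLS term. Concretely, fix $\zeta\in\Omega$ and $R$ small so that $\Sigma_R(\zeta)\subset\Omega$, and take the truncated test function $g$ defined exactly as in the proof of Proposition \ref{prop2-1}. Writing $N:=\|f_\epsilon\|_{L^{q_\alpha}(\mathbb{H}^n)}$ (independent of $\epsilon$), the computations already carried out there give $\|g\|^2_{L^{q_\alpha}(\Omega)}=N^2-O(\epsilon^Q)$ and
\[
\int_{\Omega}\int_{\Omega}\frac{g(\xi)g(\eta)}{|\eta^{-1}\xi|^{Q-\alpha}}\,d\xi\,d\eta
= D_{n,\alpha}N^2-I_1+I_2,
\qquad I_1=O(\epsilon^Q),\quad I_2=O(\epsilon^{Q+\alpha}),
\]
since $I_1=O((R/\epsilon)^{-Q})$ and $I_2=O((R/\epsilon)^{-Q-\alpha})$. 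Thus the leading term of the numerator of $Q_\lambda(\Omega)$ evaluated at $g$ equals $D_{n,\alpha}N^2-O(\epsilon^Q)$.

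The heart of the argument is a lower bound for the extra term. Using the change of variables $\xi=\zeta\,\delta_\epsilon(\xi')$, $\eta=\zeta\,\delta_\epsilon(\eta')$, for which $d\xi=\epsilon^Q\,d\xi'$, $f_\epsilon(\xi)=\epsilon^{-(Q+\alpha)/2}H(\xi')$ and $|\eta^{-1}\xi|=\epsilon|\eta'^{-1}\xi'|$, the dilation on $\Sigma_R(\zeta)$ maps the domain to $\Sigma_{R/\epsilon}(0)$, and a direct count of powers of $\epsilon$ (namely $-(Q+\alpha)-(Q-\alpha-1)+2Q=1$) yields
\[
\lambda\int_{\Sigma_R(\zeta)}\int_{\Sigma_R(\zeta)}\frac{f_\epsilon(\xi)f_\epsilon(\eta)}{|\eta^{-1}\xi|^{Q-\alpha-1}}\,d\xi\,d\eta
=\lambda\,\epsilon\int_{\Sigma_{R/\epsilon}(0)}\int_{\Sigma_{R/\epsilon}(0)}\frac{H(\xi')H(\eta')}{|\eta'^{-1}\xi'|^{Q-\alpha-1}}\,d\xi'\,d\eta'.
\]
For $\epsilon$ small enough that $R/\epsilon\ge 1$, the inner integral is bounded below by the fixed positive constant $c_0:=\int_{\Sigma_1(0)}\int_{\Sigma_1(0)}|\eta'^{-1}\xi'|^{-(Q-\alpha-1)}H(\xi')H(\eta')\,d\xi'\,d\eta'>0$, so this extra term is $\ge \lambda c_0\,\epsilon$. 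Working over the fixed region $\Sigma_1(0)$ avoids any question of convergence of the full-space integral (which is delicate when $\alpha$ is close to $Q$), since only a uniform lower bound is needed.

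Combining the three estimates,
\[
Q_\lambda(\Omega)\ \ge\ \frac{\langle I_{\alpha,\Omega}g,g\rangle+\lambda\,(\text{extra term})}{\|g\|^2_{L^{q_\alpha}(\Omega)}}
\ \ge\ \frac{D_{n,\alpha}N^2+\lambda c_0\,\epsilon-O(\epsilon^Q)}{N^2-O(\epsilon^Q)}
\ =\ D_{n,\alpha}+\frac{\lambda c_0}{N^2}\,\epsilon+o(\epsilon),
\]
which is strictly greater than $D_{n,\alpha}$ once $\epsilon$ is chosen small. The only point requiring care is the comparison of orders: the gain from the lower-order kernel is of order $\epsilon^1$, while both the truncation loss $I_1$ in the leading term and the correction in the denominator are of order $\epsilon^Q$. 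Since $Q=2n+2\ge 4>1$, the gain dominates, and the strict inequality follows. I therefore expect the main (though routine) obstacle to be the scaling bookkeeping that pins down the exponent $\epsilon^1$ for the lower-order term together with the positivity of $c_0$; everything else is inherited directly from Proposition \ref{prop2-1}.
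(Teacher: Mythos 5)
Your proposal is correct and follows essentially the same route as the paper: truncate the concentrating extremal $f_\epsilon$ to a cylinder $\Sigma_R(\zeta)\subset\Omega$, reuse the estimates $I_1=O((R/\epsilon)^{-Q})$, $I_2=O((R/\epsilon)^{-Q-\alpha})$ from Proposition \ref{prop2-1}, and show by the dilation $\xi=\zeta\,\delta_\epsilon(\xi')$ that the lower-order kernel contributes a term of order $\lambda\epsilon$, which dominates since $Q>1$. Your lower bound of the rescaled integral by the fixed quantity over $\Sigma_1(0)\times\Sigma_1(0)$ is a slightly more explicit justification of the paper's constant $C_0$, but the argument is the same.
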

\begin{proof} Let $\zeta\in \Omega$.
For small positive $\epsilon$ and a fixed $R>0$ so that $\Sigma_R(\zeta) \subset \Omega$, we define
\begin{equation*}
    \tilde{f}_\epsilon(\xi)= \begin{cases}
    f_\epsilon(\xi) &\quad \xi\in \Sigma_{R}(\zeta)\subset\Omega,\\
    0&\quad \xi\in\mathbb{H}^n\backslash \Sigma_{R}(\zeta), \end{cases}
\end{equation*}
where $f_\epsilon$ is given by \eqref{fe}.
Obviously, $\tilde{f}_\epsilon\in L^{q_\alpha}(\mathbb{H}^n).$ Thus,  similar to the proof of Proposition \ref{prop2-1},  we have
\begin{align*}\label{BE-1}
    &\int_{\Omega} \int_{\Omega} \big(\frac{1}{|\eta^{-1}\xi|^{Q-\alpha}} +\frac{\lambda}{|\eta^{-1}\xi|^{Q-\alpha-1}}\big) \tilde{f}_\epsilon(\xi)\tilde{f}_\epsilon(\eta) d\xi d\eta\\
    =&\int_{\mathbb{H}^n} \int_{\mathbb{H}^n} \frac{1}{|\eta^{-1}\xi|^{Q-\alpha}}{f_\epsilon}(\xi){f_\epsilon}(\eta) d\xi d\eta\\
    &-2\int_{\mathbb{H}^n} \int_{\Sigma_R^C(\zeta)} \frac{ f_\epsilon(\xi) f_\epsilon(\eta)}{|\eta^{-1}\xi|^{Q-\alpha}} d\xi d\eta +\int_{\Sigma_R^C(\zeta)} \int_{\Sigma_R^C(\zeta)} \frac{ f_\epsilon(\xi) f_\epsilon(\eta)}{|\eta^{-1}\xi|^{Q-\alpha}} d\xi d\eta\\
    &+\lambda \int_{\Sigma_R(\zeta)} \int_{\Sigma_R(\zeta)}\frac{f_\epsilon(\xi)f_\epsilon(\eta)} {|\eta^{-1}\xi|^{Q-\alpha-1}}\big) d\xi d\eta\\
    =&D_{n,\alpha}\|f_\epsilon\|^2_{L^{q_\alpha}(\mathbb{H}^n)}-I_1+I_2+I_3,
\end{align*}
where
\begin{align*}
    I_1:=&2\int_{\mathbb{H}^n} \int_{\Sigma_R^C(\zeta)} \frac{ f_\epsilon(\xi) f_\epsilon(\eta)}{|\eta^{-1}\xi|^{Q-\alpha}} d\xi d\eta,\\
    I_2:=&\int_{\Sigma_R^C(\zeta)} \int_{\Sigma_R^C(\zeta)} \frac{ f_\epsilon(\xi) f_\epsilon(\eta)}{|\eta^{-1}\xi|^{Q-\alpha}} d\xi d\eta,\\
    I_3:=&\lambda \int_{\Sigma_R(\zeta)} \int_{\Sigma_R(\zeta)}\frac{f_\epsilon(\xi)f_\epsilon(\eta)} {|\eta^{-1}\xi|^{Q-\alpha-1}}\big) d\xi d\eta.
\end{align*}
For $I_3$, we have
\begin{align*}
    I_3:=&\lambda \int_{\Sigma_R(0)} \int_{\Sigma_R(0)} \frac{\epsilon^{-\frac{Q+\alpha}2}H(\delta_{\epsilon^{-1}}(\xi)) \epsilon^{-\frac{Q+\alpha}2}H(\delta_{\epsilon^{-1}}(\eta))} {|\eta^{-1}\xi|^{Q-\alpha-1}} d\xi d\eta\\
    =&\lambda\epsilon\int_{\Sigma_{R/\epsilon}(0)} \int_{\Sigma_{R/\epsilon}(0)} \frac{H(\xi)H(\eta)}{|\eta^{-1}\xi|^{Q-\alpha-1}} d\xi d\eta\ge C_0\lambda\epsilon.
\end{align*}
So, for $\lambda>0,$ and small enough $\epsilon$, we have
\begin{eqnarray*}
-I_1+I_2+I_3\ge -C_1(\frac{R}\epsilon)^{-Q}+C_0\lambda\epsilon =\epsilon\big(-\frac{C_1}{R}(\frac\epsilon R)^{Q-1}+C_0\lambda\big)>0.
\end{eqnarray*}
\end{proof}

To show the existence of weak solution, we first establish the following criterion for the existence of maximizer for energy $Q_\lambda(\Omega)$.

\begin{proposition}\label{sub-existence-2}
If $Q_\lambda(\Omega)>D_{n,\alpha}$ for a given $\lambda>0$,  $Q_\lambda(\Omega) $ is achieved by a positive function $f_* \in L^{q_\alpha}(\Omega).$
\end{proposition}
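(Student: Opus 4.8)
The plan is to run the standard threshold (Brezis--Nirenberg) argument, transplanted to the nonlocal HLS functional, in which the strict inequality $Q_\lambda(\Omega)>D_{n,\alpha}$ is exactly the budget that forbids loss of compactness. Write $N(f):=\langle I_{\alpha,\Omega}f,f\rangle+\lambda\langle I_{\alpha+1,\Omega}f,f\rangle$ for the numerator, where $I_{\alpha+1,\Omega}$ has the kernel $|\eta^{-1}\xi|^{-(Q-\alpha-1)}$. First I would take a nonnegative maximizing sequence $\{f_j\}$ with $\|f_j\|_{L^{q_\alpha}(\Omega)}=1$ and $N(f_j)\to Q_\lambda(\Omega)$, and extract a weak limit $f_j\rightharpoonup f_*\ge 0$ in $L^{q_\alpha}(\Omega)$. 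The functional splits into a \emph{critical} part (the scaling-invariant operator $I_{\alpha,\Omega}$, which is \emph{not} compact) and a \emph{subcritical} part $I_{\alpha+1,\Omega}$. The lower-order operator is compact on $L^{q_\alpha}(\Omega)$: when $\alpha+1<Q$ this is Lemma \ref{compact lem} applied with exponent $\alpha+1$ (using $L^{q_\alpha}(\Omega)\hookrightarrow L^{2Q/(Q+\alpha+1)}(\Omega)$ on the bounded domain), and when $\alpha+1\ge Q$ the kernel is bounded so compactness is immediate. Hence $\langle I_{\alpha+1,\Omega}f_j,f_j\rangle\to\langle I_{\alpha+1,\Omega}f_*,f_*\rangle$.

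Next I would rule out $f_*\equiv 0$: if $f_*=0$ the subcritical term tends to $0$, while the sharp HLS inequality (Theorem \ref{HLS Hn}) gives $\langle I_{\alpha,\Omega}f_j,f_j\rangle\le D_{n,\alpha}\|f_j\|_{L^{q_\alpha}}^2=D_{n,\alpha}$, so $Q_\lambda(\Omega)\le D_{n,\alpha}$, contradicting the hypothesis; thus $f_*\ne 0$. For the core defect analysis set $r_j:=f_j-f_*\rightharpoonup 0$. By bilinearity $\langle I_{\alpha,\Omega}f_j,f_j\rangle=\langle I_{\alpha,\Omega}f_*,f_*\rangle+2\langle I_{\alpha,\Omega}f_*,r_j\rangle+\langle I_{\alpha,\Omega}r_j,r_j\rangle$, and the cross term vanishes because $I_{\alpha,\Omega}f_*\in L^{p_\alpha}(\Omega)=(L^{q_\alpha}(\Omega))'$ is fixed while $r_j\rightharpoonup 0$. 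Applying the sharp HLS to $r_j$ gives $\langle I_{\alpha,\Omega}r_j,r_j\rangle\le D_{n,\alpha}\|r_j\|_{L^{q_\alpha}}^2$. Combining these with the Brezis--Lieb splitting $\|f_j\|_{L^{q_\alpha}}^{q_\alpha}=\|f_*\|_{L^{q_\alpha}}^{q_\alpha}+\|r_j\|_{L^{q_\alpha}}^{q_\alpha}+o(1)$, and writing $a:=\|f_*\|_{L^{q_\alpha}}^{q_\alpha}$, $b:=\lim_j\|r_j\|_{L^{q_\alpha}}^{q_\alpha}$ (along a subsequence, so $a+b=1$), together with $N(f_*)\le Q_\lambda(\Omega)\,a^{2/q_\alpha}$, I arrive at
\[
Q_\lambda(\Omega)\ \le\ Q_\lambda(\Omega)\,a^{2/q_\alpha}+D_{n,\alpha}\,b^{2/q_\alpha}.
\]
Since $\alpha>0$ forces $q_\alpha\in(1,2)$, the exponent $2/q_\alpha>1$, whence $a^{2/q_\alpha}+b^{2/q_\alpha}\le a+b=1$; using $D_{n,\alpha}<Q_\lambda(\Omega)$ this is impossible unless $b=0$. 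Therefore $a=1$, $r_j\to 0$ strongly in $L^{q_\alpha}(\Omega)$, and $N(f_j)\to N(f_*)=Q_\lambda(\Omega)$ with $\|f_*\|_{L^{q_\alpha}}=1$, i.e. $f_*$ is a maximizer. Finally $f_*\ge 0$, $f_*\not\equiv 0$, and since $f_*$ solves \eqref{HB} with $q=q_\alpha$ (up to a positive multiplier) with strictly positive kernels, the right-hand side is strictly positive throughout $\overline{\Omega}$, so $f_*>0$.

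The hard part is the mass-splitting identity, namely justifying the Brezis--Lieb decomposition of the $L^{q_\alpha}$ norm (and, equivalently, controlling the concentration of the critical term): weak $L^{q_\alpha}$ convergence alone does not yield the pointwise a.e.\ convergence of $\{f_j\}$ that Brezis--Lieb requires. I expect this to be the main obstacle, and I would resolve it in one of two ways. The cleaner route, matching the machinery built for Section \ref{Sec critical exist}, is to upgrade the maximizing sequence to genuine subcritical maximizers $f^{(q)}$ for $q\downarrow q_\alpha$, which exist and are $\Gamma^\alpha$-regular by Lemmas \ref{Bound- attained} and \ref{regularity-lemma}, solve the Euler--Lagrange equation, and admit uniform a priori bounds via the blow-up analysis of Lemma \ref{lem3-3}; the strict inequality $Q_\lambda(\Omega)>D_{n,\alpha}$ is precisely what excludes blow-up (a concentrating profile would carry only the energy $D_{n,\alpha}$ of the extremal bubble $f_\epsilon$), so Arzel\`a--Ascoli yields uniform---hence a.e.\ and $L^{q_\alpha}$---convergence to the maximizer $f_*$. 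Alternatively one may invoke Lions' concentration--compactness principle adapted to the nonlocal HLS functional, which tracks the defect at the level of measures $|f_j|^{q_\alpha}\,d\xi$ and discards every atom as soon as $Q_\lambda(\Omega)>D_{n,\alpha}$, thereby forcing strong convergence directly.
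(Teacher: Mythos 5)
Your primary argument (weak limit plus bilinear expansion, compactness of the lower-order term $I_{\alpha+1,\Omega}$, sharp HLS applied to the remainder $r_j$, and the strict subadditivity $a^{2/q_\alpha}+b^{2/q_\alpha}\le 1$ driven by $Q_\lambda(\Omega)>D_{n,\alpha}$) is a genuinely different route from the paper's: the paper never touches a raw maximizing sequence at the critical exponent. Instead it takes the subcritical maximizers $f_q$ of $Q_{\lambda,q}(\Omega)$ for $q>q_\alpha$ (existence and $\Gamma^\alpha$-regularity from Lemmas \ref{Bound- attained} and \ref{regularity-lemma}), uses $Q_{\lambda,q}(\Omega)\to Q_\lambda(\Omega)$ to put itself in the hypotheses of the blow-up Lemma \ref{lem3-3}, concludes a uniform $L^\infty$ bound, reads off equicontinuity from the integral equation \eqref{sub-equ2}, and passes to a $C^0(\overline\Omega)$ limit $f_*$ by Arzel\`a--Ascoli --- which is word for word the ``cleaner route'' you describe at the end. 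You have correctly located the one real gap in your direct route: the Brezis--Lieb splitting $\|f_j\|_{q_\alpha}^{q_\alpha}=\|f_*\|_{q_\alpha}^{q_\alpha}+\|r_j\|_{q_\alpha}^{q_\alpha}+o(1)$ needs pointwise a.e.\ convergence of $f_j$, which weak $L^{q_\alpha}$ convergence does not supply (and, unlike the $H^1$ Brezis--Nirenberg setting, there is no compact embedding here to produce it); as written the direct proof is therefore incomplete at exactly that step, and either of your proposed repairs is needed. The trade-off is clear: your subadditivity argument, once patched (e.g.\ by concentration--compactness at the level of the measures $|f_j|^{q_\alpha}d\xi$), is shorter and does not require the regularity theory or the a priori bounds of Section \ref{Sec critical exist}; the paper's approximation scheme costs more machinery but delivers more --- a maximizer that is already continuous (indeed $\Gamma^\alpha(\overline\Omega)$ after the final remark) together with uniform bounds on the subcritical family, which is what the blow-up analysis was built to provide. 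Everything else in your write-up (compactness of $I_{\alpha+1,\Omega}$ via Lemma \ref{compact lem} with $\alpha$ replaced by $\alpha+1$, the vanishing of the cross term by duality $I_{\alpha,\Omega}f_*\in L^{p_\alpha}$, the exclusion of $f_*\equiv 0$, and strict positivity from the Euler--Lagrange equation) is correct.
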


To complete the proof of {\bf Proposition \ref{sub-existence-2}}, we will adapt the method of blow-up analysis. Namely, we will prove firstly the existence of \eqref{HB} with subcritical exponent problem and then get the existence of \eqref{HB} with critical exponent by compactness.

\medskip

Consider
$$Q_{\lambda, q}(\Omega)=\sup_{f\in L^{q}(\Omega)\setminus\{0\}}\frac{ \int_{\Omega} \int_{\Omega} f(\xi)(|\eta^{-1}\xi|^{-(Q-\alpha)}+\lambda |\eta^{-1}\xi|^{-(Q-\alpha-1)}) f(\eta) d\xi d\eta}{\|f\|^2_{L^{q}(\Omega)}}
$$
for $q>q_\alpha.$ Similar to the proof of Lemma \ref{Bound- attained}, we easily show that the supreme is attained by a positive function $f_q$, which satisfies
the subcritical equation
\begin{equation}\label{sub-equ2}
Q_{\lambda, q}(\Omega) f^{q-1}(\xi)=\int_\Omega\frac{f(\eta)}{|\eta^{-1}\xi|^{Q-\alpha}}d\eta+ \lambda \int_\Omega\frac {f(\eta)}{|\eta^{-1}\xi|^{Q-\alpha-1}}d\eta,\quad \xi\in \overline \Omega
\end{equation}
with the constraint $||f_q||_{q}=1.$ Further, we can show easily that $f_q \in \Gamma^\alpha(\overline \Omega)$  and  $Q_{\lambda, q}(\Omega) \to Q_\lambda(\Omega)$ for $q \to q_\alpha^+$.

\begin{lemma}\label{lem3-3}
Let $f_q>0$ being   maximum energy solutions to \eqref{sub-equ2} for $q\in (q_\alpha, 2).$ If  there exists some $q_0\in (q_\alpha,2)$ such that $Q_{\lambda, q}(\Omega)\ge D_{n,\alpha}+\epsilon$ for any $q\in (q_\alpha,q_0)$, then the sequence $\{f_q\}_{q_\alpha<q<q_0}$ is uniformly bounded in $\Omega$.
\end{lemma}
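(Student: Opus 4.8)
The plan is to argue by contradiction using a blow-up (concentration) analysis, and to exploit the strict energy gap $Q_{\lambda,q}(\Omega)\ge D_{n,\alpha}+\epsilon$ to exclude concentration. Suppose the conclusion fails, so that along a sequence $q\to q_\alpha^+$ one has $M_q:=\|f_q\|_{L^\infty(\overline\Omega)}=f_q(\xi_q)\to+\infty$ for some $\xi_q\in\overline\Omega$. I would set $\mu_q:=M_q^{-(2-q)/\alpha}$, so that $\mu_q\to0$ because $q<2$, and rescale by
\[
g_q(\xi):=M_q^{-1}f_q(\xi_q\circ\delta_{\mu_q}(\xi)),\qquad \xi\in\Omega_q:=\delta_{\mu_q^{-1}}(\xi_q^{-1}\circ\Omega).
\]
Then $0\le g_q\le1$, $g_q(0)=1$, the rescaled domains $\Omega_q$ expand, and since $\eta^{-1}\xi$ transforms to $\delta_{\mu_q}(\eta'^{-1}\xi')$ under this left translation and dilation (so the distance kernel scales by $\mu_q$), equation \eqref{sub-equ2} turns into
\[
Q_{\lambda,q}(\Omega)\,g_q^{q-1}(\xi)=\int_{\Omega_q}\frac{g_q(\eta)}{|\eta^{-1}\xi|^{Q-\alpha}}\,d\eta+\lambda\mu_q\int_{\Omega_q}\frac{g_q(\eta)}{|\eta^{-1}\xi|^{Q-\alpha-1}}\,d\eta,
\]
where the lower-order kernel now carries the small coefficient $\lambda\mu_q\to0$; the choice of $\mu_q$ is precisely what normalizes the leading term to coefficient one.

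The energy is then split into two pieces. The leading piece is bounded harmlessly, without any use of the blow-up, by the sharp inequality \eqref{class-HLS} applied to $\tilde f_q$ on $\mathbb{H}^n$ together with H\"older's inequality on the bounded set $\Omega$ (recall $q>q_\alpha$):
\[
\int_\Omega\int_\Omega\frac{f_q(\xi)f_q(\eta)}{|\eta^{-1}\xi|^{Q-\alpha}}\,d\xi\,d\eta\le D_{n,\alpha}\|f_q\|_{L^{q_\alpha}(\Omega)}^2\le D_{n,\alpha}|\Omega|^{2(1/q_\alpha-1/q)}=D_{n,\alpha}(1+o(1)).
\]
Hence the entire burden is to prove that the lower-order term $\lambda\int_\Omega\int_\Omega|\eta^{-1}\xi|^{-(Q-\alpha-1)}f_q(\xi)f_q(\eta)\,d\xi\,d\eta\to0$: granting this, $Q_{\lambda,q}(\Omega)\le D_{n,\alpha}+o(1)$, contradicting $Q_{\lambda,q}(\Omega)\ge D_{n,\alpha}+\epsilon$ and forcing $M_q$ to stay bounded. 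Before doing so I would control the blow-up rate itself. Using the kernel H\"older estimates of Proposition \ref{pro 1.15 of Folland} exactly as in Lemma \ref{regularity-lemma}, but now uniformly in $q$, I expect a uniform local equicontinuity for $\{g_q\}$, giving $g_q\to g$ locally uniformly with $g(0)=1$; in particular $g_q\ge\tfrac12$ on a fixed ball and so $\|g_q\|_{L^q(\Omega_q)}^q\ge c>0$. Since a change of variables gives $\|g_q\|_{L^q(\Omega_q)}^q=M_q^{-q}\mu_q^{-Q}=M_q^{-q+Q(2-q)/\alpha}$, this lower bound pins $(q-q_\alpha)\log M_q=O(1)$, which keeps all scaling prefactors (in particular $M_q^{2}\mu_q^{Q+\alpha}$) bounded as $q\to q_\alpha$.

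Finally I would pass the lower-order term to the rescaled picture, where it equals $\lambda\mu_q\,(M_q^{2}\mu_q^{Q+\alpha})\int_{\Omega_q}\int_{\Omega_q}|\eta^{-1}\xi|^{-(Q-\alpha-1)}g_q(\xi)g_q(\eta)\,d\xi\,d\eta$. As $\mu_q\to0$ and the prefactor is bounded by the previous step, it suffices to bound the remaining double integral uniformly; by the general inequality \eqref{class-HLS-1} with kernel exponent $\alpha+1$ this is controlled by $\|g_q\|_{L^p(\Omega_q)}^2$ with $p=\tfrac{2Q}{Q+\alpha+1}<q_\alpha$. This is the heart of the matter, and the step I expect to be the main obstacle: because $p$ lies below the natural exponent $q$, no $L^p$ bound is free on the expanding domains, and I must establish a uniform decay estimate for $g_q$ of bubble type, e.g. $g_q(\xi)\le C(1+|\xi|)^{-(Q+\alpha)}$. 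I would prove this by a comparison/bootstrap on the rescaled equation, splitting its defining integral over $\{|\eta|\le|\xi|/2\}$ and its complement and upgrading the a priori bound $g_q\le1$; the exponent $1/(q-1)\to(Q+\alpha)/(Q-\alpha)$ produces exactly the rate $Q+\alpha$, and the locally uniform limit $g$ solves $Q_\lambda(\Omega)g^{q_\alpha-1}=I_\alpha g$ on $\mathbb{H}^n$, hence is a full extremal bubble by the Frank--Lieb classification of Theorem \ref{HLS Hn}. The degenerate possibility that $\xi_q\to\partial\Omega$, in which $\Omega_q$ would converge to a half-space, is ruled out because the limiting pure-HLS integral equation admits no positive solution on a proper subdomain, by the nonexistence already contained in Proposition \ref{prop2-1}; alternatively, the two energy bounds above never use interior-versus-boundary information but only the uniform decay, so the contradiction persists either way.
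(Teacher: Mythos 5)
Your setup coincides with the paper's: the same contradiction hypothesis, the same scale $\mu_q=M_q^{-(2-q)/\alpha}$, the same rescaled function $g_q$ and rescaled equation with the lower-order kernel carrying the factor $\mu_q\to0$. But from there you take a much harder road, and the road has a hole in it. You reduce everything to showing that the perturbation energy $\lambda\int_\Omega\int_\Omega|\eta^{-1}\xi|^{-(Q-\alpha-1)}f_qf_q$ tends to zero, and after rescaling this hinges on a \emph{uniform} bubble-type decay $g_q(\xi)\le C(1+|\xi|)^{-(Q+\alpha)}$ on the expanding domains $\Omega_q$ (an $L^p$ bound with $p=\tfrac{2Q}{Q+\alpha+1}<q_\alpha$ is not free). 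You explicitly flag this as the main obstacle and only sketch a bootstrap for it; as written this is the load-bearing step and it is not proved. Equivalently, by compactness of $I_{\alpha+1,\Omega}$ your claim amounts to the weak $L^{q_\alpha}$-limit of $f_q$ being zero, i.e.\ ruling out the ``weak limit plus bubble'' splitting, which is itself a concentration-compactness statement needing proof. Two subsidiary appeals are also off: the Frank--Lieb theorem classifies \emph{extremals} of the HLS inequality, not all positive solutions of the limiting integral equation, so you cannot conclude ``$g$ is a full bubble'' from it; and Proposition \ref{prop2-1} says the supremum is not attained on proper domains, not that the integral equation has no positive solution there, so it does not dispose of the boundary (half-space) case.

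The paper avoids all of this. After establishing exactly your two estimates --- that the lower-order term in the \emph{pointwise equation} for $g_q$ vanishes (estimate \eqref{small-term}) and that the tails of the kernel integral vanish (estimate \eqref{tail-term}) --- it passes to the limit in the equation itself, obtaining $Q_\lambda(\Omega)\,g^{q_\alpha-1}=I_\alpha g$ on $\mathbb{H}^n$ (or on a limit domain $\Omega_{q_\alpha}$ in the boundary case, treated identically after extending $g$ by zero) with $g(0)=1$. The change of variables in the constraint $\|f_q\|_q=1$ gives $\int g^{q_\alpha}\le 1$, hence $\|g\|_{q_\alpha}^{q_\alpha}\le\|g\|_{q_\alpha}^{2}$, and pairing the limit equation with $g$ yields
\[
D_{n,\alpha}+\epsilon\le Q_\lambda(\Omega)=\frac{\langle I_\alpha g,g\rangle}{\|g\|_{q_\alpha}^{q_\alpha}}\le\frac{\langle I_\alpha g,g\rangle}{\|g\|_{q_\alpha}^{2}}\le D_{n,\alpha},
\]
a contradiction requiring no decay estimate, no classification of solutions, and no analysis of where the perturbation energy goes. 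I would either adopt this shortcut or supply a complete proof of the uniform decay; without one of the two, your argument does not close.
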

\begin{proof}
We only need to show $\lim_{q \to q_\alpha^+} ||f_p||_{C^0({\overline \Omega})} \le C.$ We prove this by contradiction. Suppose not. Let $f_q(\xi_q)=\max_{\overline \Omega} f_q(\xi)$. Then $f_q(\xi_q) \to \infty$ for $q \to q_\alpha^+$. Let
    $$\mu_q= f_q^{-\frac {2-q}\alpha}(\xi_q), \, \, \mbox{ and} \, \, \Omega_q=\delta_{\mu_q^{-1}}(\xi_q^{-1}\Omega):=\{\varsigma \ | \ \varsigma=\delta_{\mu_q^{-1}}(\xi_q^{-1}\xi) \ \mbox{for} \, \xi \in \Omega\}.$$
Define
\begin{equation}\label{blowup-1}
    g_q(\varsigma)=\mu_q^{\frac{\alpha}{2-q}} f_q(\xi_q\delta_{\mu_q}(\varsigma)), \,  \ \, \, \mbox{for} \, \, \varsigma \in \Omega_q.
\end{equation}
Then, $g_q$ satisfies
\begin{equation}\label{sub-equ3}
    Q_{\lambda, q}(\Omega) g_q^{q-1}(\varsigma)=\int_{\Omega_q}\frac{g_q(\eta)} {|\eta^{-1}\varsigma|^{Q-\alpha}}d\eta+ \lambda \int_\Omega\frac {f_q(\xi_q)^{1-q} \cdot f_q(\eta)}{|\eta^{-1}\xi|^{Q-\alpha-1}}d\eta
\end{equation}
and $g_q(0)=1$, $g_q(\varsigma) \in (0, 1].$

Choose $q_\delta=q-\delta$ for some small $\delta>0$. We can check that
\begin{align}\label{small-term}
    \int_\Omega\frac {f_q(\xi_q)^{1-q} \cdot f_q(\eta)}{|\eta^{-1}\xi|^{Q-\alpha-1}}d\eta
    =& f_q(\xi_q)^{-\delta} \int_\Omega\frac {f_q(\xi_q)^{1-q_\delta} \cdot f_q(\eta)}{|\eta^{-1}\xi|^{Q-\alpha-1}}d\eta \nonumber\\
    \le &  f_q(\xi_q)^{-\delta}  \int_\Omega\frac {f_q(\eta)^{2-q_\delta}} {|\eta^{-1}\xi|^{Q-\alpha-1}}d\eta \nonumber\\
    \le & C f_q(\xi_q)^{-\delta} \cdot ||f_q||_{q}^{2-q_\delta} \to 0, \ \  \ \mbox{as} \ \ \  q\to q_\alpha^+.
\end{align}

For $\varsigma\in \mathbb{H}^n$ and $R>B|\varsigma|$, where the constant $B$ is larger than the constant in {\bf Proposition \ref{Pro triangle ineq}}, we have
\begin{align}\label{tail-term}
    &\int_{\Omega_q\setminus B_R}\frac{g_q(\eta)} {|\eta^{-1}\varsigma|^{Q-\alpha}}d\eta\leq C\int_{\Omega_q\setminus B_R}\frac{g_q(\eta)} {|\eta|^{Q-\alpha}}d\eta\nonumber\\
    =& C\mu_q^{\frac\alpha{2-q}-\alpha} \int_{\Omega\setminus B_{R\mu_q}(\xi_q)} \frac{f_q(\eta)} {|\xi_q^{-1}\eta|^{Q-\alpha}} d\eta\nonumber\\
    \leq & C\mu_q^{\frac\alpha{2-q}-\alpha} \|f_q\|_q \left(\int_{\Omega\setminus B_{R\mu_q}(\xi_q)} |\xi_1^{-1}\eta|^{(\alpha-Q)\frac q{q-1}} d\eta\right)^{\frac{q-1}q}\nonumber\\
    \leq & C \mu_q^{\frac\alpha{2-q}-\alpha} (R\mu_q)^{\alpha-\frac Qq}\rightarrow 0,
\end{align}
as $q\rightarrow q_\alpha^+$ and $R\rightarrow+\infty$.

As $q\to q_\alpha^+$, there are two cases:
\noindent {\textbf{Case 1.}} \ $\Omega_q \to \mathbb{H}^n,$ and $\bm{g_q(z) \to g(z)}$ pointwise in $\bm{\mathbb{H}^n}$, where $g(z)$ satisfies (from \eqref{sub-equ3}, and estimates \eqref{small-term} and \eqref{tail-term}):
\begin{equation}\label{blowup-3}
    Q_\lambda(\Omega) g^{q_\alpha-1}(\varsigma)= \int_{\mathbb{H}^n} \frac {g(\eta)}{|\eta^{-1}\varsigma|^{Q-\alpha}} d\eta, \ \ \ \ g(0)=1.
\end{equation}
Also, a direct computation yields
    $$1=\int_\Omega f_q^q(\xi) d\xi= \mu_q^{Q-\frac q{2-q}\alpha} \int_{\Omega_q} g_q^q(\varsigma) d\varsigma \ge \int_{\Omega_q} g_q^q d\varsigma.$$
Thus $\int_{\mathbb{H}^n} g^{q_\alpha} d\varsigma \le 1.$  Combining this with \eqref{blowup-3}, we have
    $$D_{n,\alpha}+\epsilon\le Q_\lambda(\Omega)= \frac{\int_{\mathbb{H}^n} \int_{\mathbb{H}^n} \frac {g(\xi) g(\eta)}{|\eta^{-1}\xi|^{Q-\alpha}} d\xi d\eta}{\|g\|^{q_\alpha}_{L^{q_\alpha}(\mathbb{H}^n)}} \le \frac{\int_{\mathbb{H}^n} \int_{\mathbb{H}^n} \frac {g(\xi) g(\eta)}{|\eta^{-1}\xi|^{Q-\alpha}} d\xi d\eta}{\|g\|^{2}_{L^{q_\alpha}(\mathbb{H}^n)}}\le D_{n,\alpha}.$$
Contradiction!

\textbf{Case 2.} \  $\Omega_q \to \Omega_{q_\alpha}$, where $\Omega_{q_\alpha}$ is some subset of $\mathbb{H}^n$ satisfying $\omega_{q_\alpha}\neq\mathbb{H}^n$,
 $g_q(z) \to g(z) $ pointwise in $\Omega_{q_\alpha}$, where $g(z)$ satisfies
(from \eqref{sub-equ3}, and estimates \eqref{small-term} and \eqref{tail-term}):
\begin{equation}\label{blowup-2}
    Q_\lambda(\omega) g^{q_\alpha-1}(\varsigma)= \int_{\Omega_{q_\alpha}} \frac {g(\eta)}{|\eta^{-1}\varsigma|^{Q-\alpha}} d\eta, \ \ \ \ g(0)=1.
\end{equation}
Similarly, we know
$\int_{\Omega_{q_\alpha}} g^{q_\alpha} d\eta \le 1.$  Combining this with \eqref{blowup-2}, we have
$$
    D_{n,\alpha}+\epsilon\le Q_\lambda= \frac{\int_{\Omega_{q_\alpha}} \int_{\Omega_{q_\alpha}} \frac {g(\xi) g(\eta)}{|\eta^{-1}\xi|^{Q-\alpha}} d\xi d\eta} {\|g\|^{q_\alpha}_{L^{q_\alpha}(\Omega_{q_\alpha})}} \le \frac{\int_{\mathbb{H}^n} \int_{\mathbb{H}^n} \frac {\tilde g(\xi) \tilde g(\eta)}{|\eta^{-1}\xi|^{Q-\alpha}} d\xi d\eta}{\|\tilde g\|^{2}_{L^{q_\alpha}(\mathbb{H}^n)}}\le D_{n,\alpha}.
$$
Contradiction!
\end{proof}

\medskip

\noindent{\bf Proof of Proposition \ref{sub-existence-2}}. \   Let $f_q>0$ be solutions to \eqref{sub-equ2} for $q\in (q_\alpha, 2)$, which are also the maximal functions to energy $Q_{\lambda, q}(\Omega)$. Then $||f_q||_{L^\infty(\overline \Omega)} \le C$ by {\bf Lemma \ref{lem3-3}}, which yields $f_q$ is uniformly bounded and equi-continuous  due to equation  \eqref{sub-equ2}. Thus $f_q \to f_*$ as $q \to q_\alpha$ in $C^{0}(\overline \Omega),$ and $f_*$ is the energy maximizer for $Q_\lambda(\Omega)$.

\medskip

\noindent{\bf Completion of the Proof of Theorem \ref{main} (existence for critical case)}. From Lemma \ref{Lm-inequ} and Proposition \ref{sub-existence-2}, we know that there exists a positive solution $f_*\in C^0(\overline\Omega)$ of \eqref{HB} with critical exponent. Moreover, by a similar argument of the second part of Lemma \ref{regularity-lemma}, we have $f_*\in\Gamma^\alpha(\Omega)$.

\medskip
\noindent {\bf Acknowledgements}\\
\noindent The authors would like to thank Professor Meijun Zhu and Professor Jingbo Dou for their valuable comments.
The project is supported by  the
National Natural Science Foundation of China (Grant No. 11201443) and Natural Science Foundation of Zhejiang Province(Grant No. LY18A010013).
\small

\end{document}